\begin{document}
\makeatletter
\def\@begintheorem#1#2{\trivlist \item[\hskip \labelsep{\bf #2\ #1.}] \it}
\def\@opargbegintheorem#1#2#3{\trivlist \item[\hskip \labelsep{\bf #2\ #1}\ {\rm (#3).}]\it}
\makeatother
\newtheorem{thm}{Theorem}[section]
\newtheorem{alg}[thm]{Algorithm}
\newtheorem{conj}[thm]{Conjecture}
\newtheorem{lemma}[thm]{Lemma}
\newtheorem{defn}[thm]{Definition}
\newtheorem{cor}[thm]{Corollary}
\newtheorem{exam}[thm]{Example}
\newtheorem{prop}[thm]{Proposition}
\newenvironment{proof}{{\sc Proof}.}{\rule{3mm}{3mm}}

\title{Sequencing Partial Steiner Triple Systems}
\author{Brian Alspach\\School of Mathematical and Physical Sciences\\
University of Newcastle\\Callaghan, NSW 2308,
Australia\\brian.alspach@newcastle.edu.au\\ \\
Donald L. Kreher\\
Department of Mathematical Sciences\\
Michigan Technological University\\
Houghton, Michigan 49931\\kreher@mtu.edu\\ \\
Adri\'an Pastine\\Instituto de Matem\'atica Applicada San Luis\\Universidad Nacional de San Luis\\
San Luis, Argentina\\adrian.pastine.tag@gmail.com}
\maketitle

\begin{abstract} A partial Steiner triple system of order $n$ is sequenceable if there is a sequence
of length $n$ of its distinct points such that no proper segment of the sequence is a union of
point-disjoint blocks.  We prove that if a partial Steiner triple system has at most three point-disjoint
blocks, then it is sequenceable.
\end{abstract} 

{\small{\bf Keywords}: partial Steiner triple system, sequenceable.

\section{Introduction}

An approach using posets for the problem of determining whether abelian groups are
strongly sequenceable has been introduced in \cite{A9}.  The definition of a strongly sequenceable
abelian group is not relevant for us, but what is relevant is that the approach involves extracting
a poset from a combinatorial problem and working with the poset.  The purpose of this paper is
to apply the approach to partial Steiner triple systems.   Following are several definitions
to establish the landscape.  

Given a sequence $\pi=s_1,s_2,\ldots,s_n$, a {\it segment} is a subsequence of consecutive 
entries of $\pi$.  We do not use the term `interval' because that term has a different meaning
in the context of posets.  If the segment has $t$ terms, it sometimes is called a $t$-{\it segment}.
A segment is {\it proper} if it is neither empty nor $\pi$ itself.

\begin{defn}{\em Let $\mathcal{P}$ be a poset on a ground set $\Omega$, where the elements
of $\mathcal{P}$ are subsets of $\Omega$.  If there is a sequence $$\pi=s_1,s_2,\ldots,s_t$$
of the distinct elements of $\Omega$ such that no proper segment of $\pi$ belongs to
$\mathcal{P}$, then the poset is} sequenceable.
\end{defn}

When we say that a segment belongs or does not belong to $\mathcal{P}$, we of course mean
the subset of elements comprising the segment.  Note that all of $\Omega$ is allowed to be in
the poset.

By a {\it partial Steiner triple system} we mean a collection of edge-disjoint subgraphs isomorphic to
$K_3$ whose union is the edge set of a graph $X$.  For simplicity we refer to the $K_3$ subgraphs
in the collection as {\it blocks}.  If the graph is complete, a partial Steiner triple system is, in fact, a
Steiner triple system.  The {\it order} of a partial Steiner triple system is the order of the graph $X$,
that is, the number of vertices and we use the notation $\mathrm{PST}(n)$ for a partial Steiner triple
system of order $n$.

\begin{defn}{\em Let $\mathcal{T}$ be a partial Steiner triple system of order $n$.  The} associated
poset $\mathcal{P}(\mathcal{T})$ {\em is the poset with ground set $V(X)$ whose elements are
any subsets which can be partitioned into vertex-disjoint blocks.}
\end{defn}

Note that the blocks themselves are minimal elements of the poset $\mathcal{P}(\mathcal{T})$.
Also note that the cardinalities of the elements of $\mathcal{P}(\mathcal{T})$ are multiples of 3. 

\begin{defn}{\em A partial Steiner triple system $\mathcal{T}$ is} sequenceable {\em if the associated
poset $\mathcal{P}(\mathcal{T})$ is sequenceable.}
\end{defn}

Distinct blocks are edge-disjoint by definition so that involving the word disjoint for this situation
is redundant.  Consequently, and this is a point of emphasis, when we say two blocks are disjoint,
we mean they are vertex-disjoint.

A sequence of the distinct
elements of a poset that satisfies the definition of sequenceability is said to be {\it admissible}.  In
accordance with this, a proper segment of a sequence is {\it admissible} if it contains no segments
(including itself) that belong to the poset.  A segment that is not admissible is {\it inadmissible}.
Thus, given a $\mathrm{PST}(n)$, we shall be looking for a sequence of the distinct elements of the 
vertex set $V(X)$ such that all proper segments are admissible, that is, no proper segment of the
sequence has a partition into disjoint blocks.  One common technique is to make small perturbations
of a given sequence that eliminate all inadmissible segments.

We write blocks using square brackets in order to distinguish them from arbitrary subsets of vertices of cardinality
three.  We use $V$ to denote $V(X)$ throughout the rest of the paper unless confusion would result.

\section{Universality}

When one first learns of this problem, the immediate thought is ``Wouldn't it be nice if every partial
Steiner triple system is sequenceable.''  Alas, this is not the case as we soon shall see, but some questions
arise and we present them in this short section.

\begin{thm}\label{13} The cyclic Steiner triple system of order 13 is not sequenceable.
\end{thm}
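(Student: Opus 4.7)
\medskip
\noindent\emph{Proof plan.} Realise the cyclic Steiner triple system $\mathcal{T}$ on the point set $\mathbb{Z}_{13}$ with blocks the $26$ translates of the base blocks $[0,1,4]$ and $[0,2,7]$. Since $13$ is not divisible by $3$, the only proper segments of a sequence of $V=\mathbb{Z}_{13}$ that can belong to $\mathcal{P}(\mathcal{T})$ are those of length $3$, $6$, $9$, or $12$. Thus an admissible sequence is a permutation of $\mathbb{Z}_{13}$ in which no proper $3m$-segment ($m=1,2,3,4$) partitions into $m$ pairwise vertex-disjoint blocks of $\mathcal{T}$.

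The plan is to prove the theorem by a finite, symmetry-reduced case analysis. The system admits as automorphisms both the rotation $i\mapsto i+1$ and the multiplier $i\mapsto 3i\pmod{13}$; one checks that $3\cdot[0,1,4]=[0,3,12]$ and $3\cdot[0,2,7]=[0,6,8]$ are translates of the two base blocks, so the multiplier preserves $\mathcal{T}$. Together these generate a group of order $39$. Up to this group I may fix $s_1=0$ and let $s_2$ range over a small set of orbit representatives. From each such partial sequence I then run a depth-first search that at step $k$ extends the current sequence by a previously unused point $s_k$ and discards the extension the moment the rightmost $3m$ entries ($m=1,2,3,4$, as long as $3m\le k<13$) form a union of $m$ pairwise disjoint blocks. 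The test at $m=1$ is simply ``is this triple a block?''; for $m\ge 2$ it amounts to listing the blocks contained in the segment and asking whether they admit a perfect matching on the $3m$ vertices.

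The principal obstacle is showing that every branch of this search terminates in failure. Because the forbidden-segment conditions come in four separate lengths and the length-$12$ condition involves nearly the whole sequence, I do not expect a short closed-form parity or double-counting argument to suffice; the theorem appears to hinge on computer verification. The technical care required is therefore concentrated in two places: correctly enumerating orbit representatives for $(s_1,s_2)$ under $\mathrm{Aut}(\mathcal{T})$, and implementing the matching check for the length-$6$, $9$, and $12$ segments so that the depth-first enumeration completes. Once both are in place, the exhaustion of cases rules out an admissible sequence of length $13$ and establishes that the cyclic $\mathrm{STS}(13)$ is not sequenceable.
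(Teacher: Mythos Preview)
Your exhaustive, symmetry-reduced search would in principle settle the question, but it is vastly more work than needed, and your claim that ``the theorem appears to hinge on computer verification'' is mistaken.  The paper's proof is a single paragraph and requires no search at all.  The key observation you missed is that it suffices to show that \emph{every} $12$-segment is inadmissible; once that is known, any permutation of $\mathbb{Z}_{13}$ has the two $12$-segments obtained by dropping its first or last entry, and both are inadmissible.  To see that every $12$-subset of $\mathbb{Z}_{13}$ partitions into four disjoint blocks, one simply exhibits four pairwise disjoint blocks covering twelve points---for instance $[0,2,7],\,[1,3,8],\,[5,6,9],\,[4,10,12]$, which together miss only the point $11$---and then applies the transitive rotation $\rho$ to transport this configuration to one missing any prescribed point $i$.

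So the contrast is this: your route treats all four segment lengths $3,6,9,12$ on an equal footing and relies on machine enumeration to close every branch, whereas the paper ignores lengths $3,6,9$ entirely and kills the problem at length $12$ with a single explicit parallel class of four blocks plus vertex-transitivity.  The paper's argument is shorter, hand-checkable, and also explains structurally \emph{why} the system fails to be sequenceable (it has a near-resolution missing each point), which your search would not reveal.
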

\begin{proof}  The cyclic Steiner triple system of order 13 admits the permutation $\rho=(0\;\;1\;\;
2\;\;3\;\;4\;\;5\;\;6\;\;7\;\;8\;\;9\;\;10\;\;11\;\;12)$ as an automorphism. The blocks are obtained
by the action of the group $\langle\rho\rangle$ on the two blocks $[0,1,4]$ and $[0,2,7]$.  Then
the four blocks $[0,2,7],[1,3,8],[5,6,9]\mbox{ and }[4,10,12]$ are disjoint and miss
the vertex $11$.  Hence, for any vertex $i$ there is an appropriate power of $\rho$ that maps the
preceding four disjoint blocks to four disjoint blocks missing $i$.  Therefore, every
sequence of the distinct vertices has two inadmissible 12-segments, namely, those obtained by
deleting the first vertex and the last vertex.    \end{proof}

\medskip

The preceding theorem has a straightforward proof and certainly destroys any hopes that all
partial Steiner triple systems are sequenceable.  Several questions immediately arise and here are
a few we consider in the remainder of this paper.

\smallskip

{\bf Question 1}.  Is the cyclic Steiner triple system of order 13 the smallest non-sequenceable
$\mathrm{PST}(n)$?

\smallskip

{\bf Question 2}. Is there a nice construction for producing non-sequenceable partial Steiner
triple systems of other orders?

\smallskip

{\bf Question 3}.  Are there conditions on the collection of blocks so that a given $\mathrm{PST}(n)$
is sequenceable?

\section{Disjoint Blocks}

One feature of the $\mathrm{PST}(13)$ used for the proof of Theorem \ref{13} is that there
are four mutually disjoint blocks.  In this section, we explore Question 3 above from the standpoint
of the number of mutually disjoint blocks.  First, we present a general proof strategy which  
becomes useful as the number of disjoint blocks increases.

The $\mathrm{PST}(n)$ is given in terms of one, two or three disjoint blocks and additional
vertices not belonging to the given blocks.  If there is one block, it is denoted $B_1$.  If
there are two disjoint blocks, they are designated $B_1$ and $B_2$.  Three disjoint blocks
are denoted $B_1,B_2\mbox{ and }B_3$.  The set of additional vertices is denoted $L$ in
all cases.

At the beginning none of the vertices of the $\mathrm{PST}(n)$ are labelled.  An initial
labelling is described under the restrictions that the vertices of $B_1$ are labelled 1, 2 and
3; the vertices of $B_2$ are labelled 4, 5 and 6; the vertices of $B_3$ are labelled 7, 8
and 9; and the vertices of $L$ are labelled with lower case letters $a,b,c$ and so on.
After an initial labelling is in place, vertices may be relabelled so that a given sequence
becomes admissible.  The sequence may be given ahead of time or described at some
point later in the process.

\smallskip

The next easy result looks at the case that the
$\mathrm{PST}(n)$ has no two disjoint blocks.   

\begin{lemma}\label{zero}A $\mathrm{PST}(n)$ in which any two blocks have a vertex in common
is sequenceable.
\end{lemma}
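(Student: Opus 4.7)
The hypothesis that every pair of blocks shares a vertex forces no two blocks to be vertex-disjoint. Consequently, every element of $\mathcal{P}(\mathcal{T})$ must be a single block, since a union of two or more vertex-disjoint blocks cannot arise. Sequenceability therefore reduces to exhibiting a permutation $v_1, v_2, \ldots, v_n$ of $V$ in which no three consecutive entries constitute a block.

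I plan to construct such a permutation greedily and, if necessary, repair the tail. Choose $v_1$ and $v_2$ arbitrarily; for each $i \ge 3$, pick $v_i$ from the unused vertices so that $\{v_{i-2}, v_{i-1}, v_i\}$ is not a block. The edge-disjoint property of a partial Steiner triple system guarantees at most one vertex completes the pair $\{v_{i-2}, v_{i-1}\}$ to a block, so at most one candidate is forbidden. The greedy procedure therefore succeeds at every step at which at least two vertices remain unused, that is, for $i \le n-1$.

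The only possible failure is at $i = n$, where the single remaining vertex is forced. If the resulting terminal 3-segment $B = \{v_{n-2}, v_{n-1}, v_n\}$ happens to be a block, I would swap $v_n$ with some earlier $v_j$ at a position $j \le n-3$. The new terminal segment $\{v_{n-2}, v_{n-1}, v_j\}$ is automatically admissible, since $B$ is the unique block containing $\{v_{n-2}, v_{n-1}\}$ and $v_j \neq v_n$. The three segments that change around position $j$---namely $(v_{j-2}, v_{j-1}, v_n)$, $(v_{j-1}, v_n, v_{j+1})$ and $(v_n, v_{j+1}, v_{j+2})$---each form a block only if the relevant pair already lies in a block through $v_n$, so the number of forbidden values of $j$ is at most three times the degree of $v_n$ in the block hypergraph (minus one for $B$ itself).

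The main obstacle is showing that a safe $j$ always exists among the $n-3$ candidates. When all blocks share a common vertex $w$ the degree of $v_n$ can exceed this bound, so I would bypass the greedy entirely in that case and use the explicit construction
\[
a_1, a_2, \ldots, a_k, \; w, \; b_1, b_2, \ldots, b_k, \; (\text{remaining vertices in any order}),
\]
where each block is written $\{w, a_i, b_i\}$ with the pairs $\{a_i, b_i\}$ disjoint. Every 3-segment avoiding $w$ cannot be a block, and for $k \ge 2$ the three segments containing $w$ involve the pairs $\{a_{k-1}, a_k\}$, $\{a_k, b_1\}$ and $\{b_1, b_2\}$, none of which is a petal pair. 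The residual case, in which the blocks have no common vertex, forces a rigid triangle-type configuration among any three blocks, and I expect the greedy-plus-swap technique together with a short direct analysis of this configuration to close the argument.
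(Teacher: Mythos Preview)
Your reduction to 3-segments is correct, and the greedy construction is sound through step $n-1$. But the proposal is not a complete proof: the decisive case is left open. After isolating the ``all blocks share a common vertex'' subcase, you say only that you \emph{expect} the swap technique to handle the remaining configurations. That expectation can in fact be justified---if some block $B'$ avoids $v_n$, then every block through $v_n$ meets $B'$, and by edge-disjointness these blocks meet $B'$ in distinct points, so $v_n$ has degree at most $3$; this bounds the forbidden swap positions by a constant, after which small $n$ needs a short direct check---but none of this is in your write-up. A smaller slip: your explicit sequence $a_1,\dots,a_k,w,b_1,\dots$ fails when $k=1$ and $n\ge 4$, since $a_1,w,b_1$ is then the unique block and a proper segment.

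The paper avoids all of this machinery with one observation you did not exploit. Fix any block $B_1=[1,2,3]$. Because every block meets $B_1$, any 3-segment disjoint from $\{1,2,3\}$ is automatically not a block. Hence it suffices to place $1,2,3$ among the first few positions so that no 3-segment there is a block, and then append the remaining vertices arbitrarily. Writing a second block (if one exists) as $[1,a,b]$ and taking any further vertex $c$, the prefix $1,c,2,3,a,b$ already works: each 3-segment in the prefix, and each 3-segment straddling into the tail, contains an edge of $[1,2,3]$ or of $[1,a,b]$ and so cannot be a block. This is essentially a two-line argument where yours is a multi-case programme that is still unfinished.
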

\begin{proof} Note that a sequence is inadmissible if and only if there is an inadmissible 3-segment.
This makes checking the sequences straightforward.

If there are no blocks, then every sequence of the ground set is admissible.  So arbitrarily choose
a block to be $B_1$ and arbitrary label its vertices with 1, 2 and 3.  If there are no additional blocks,
then every sequence of the distinct elements is admissible when $V=\{1,2,3\}$.  If there is at least
one additional element $a$ but no additional blocks, then every sequence beginning $1,2,a,3$ is
admissible. 

If there is at least one additional block, then by relabelling, if necessary, we may assume the block
is $[1,a,b]$.  If there are no additional elements, then $1,2,a,b,3$ is an admissible sequence.
If there is at least one additional element $c$, then every sequence beginning $1,c,2,3,a,b$ is
admissible.    \end{proof}

\bigskip

We now move to two disjoint blocks which means that 6-segments may be inadmissible.  The next
result provides useful information about 6-sets.

\begin{lemma}\label{six} Let $A$ be a 6-subset of vertices in a $\mathrm{PST}(n)$.  If $A$ has a
partition into two blocks $A_1$ and $A_2$, then the following are true:

{\rm (1)} The partition is unique;

{\rm (2)} $A$ contains no blocks other than $A_1$ and $A_2$; and

{\rm (3)} A set obtained by replacing any element of $A$ with another element from $V$ does
not have a partition into two blocks.
\end{lemma}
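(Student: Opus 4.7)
The plan is to exploit the single structural fact that characterises partial Steiner triple systems: blocks are edge-disjoint, so any two distinct blocks share at most one vertex (otherwise the edge joining two common vertices would lie in both). This observation alone drives all three parts of the lemma.

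For parts (1) and (2) simultaneously, I would take any block $B \subseteq A$ and examine $|B \cap A_1|$ and $|B \cap A_2|$. Since $|B|=3$ and $A = A_1 \cup A_2$ is a disjoint union, one of these intersections has size at least $2$; but an intersection of size $2$ with $A_i$ forces $B$ to share an edge with $A_i$, and edge-disjointness then yields $B = A_i$. Hence $B \in \{A_1, A_2\}$, which simultaneously shows that these are the only blocks contained in $A$ and that the partition of $A$ into two blocks is unique.

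For part (3), say $x \in A_1$ with $A_1 = [x,a,b]$, replace $x$ by some $y \in V$, and let $A' = \{y,a,b\} \cup A_2$. If $y \in A$, then $|A'| < 6$ and there is nothing to prove, so assume $y \notin A$ and, for contradiction, that $A' = B_1 \cup B_2$ is a partition into two blocks. If $a$ and $b$ were in the same $B_i$, then $B_i$ would share the edge $ab$ with $A_1$, forcing $B_i = A_1$; but $x \notin A'$, a contradiction. So $a$ and $b$ must be separated by the partition, which forces one of $B_1, B_2$ to contain two vertices of $A_2$. By edge-disjointness that block then equals $A_2$, contradicting the fact that it also contains $a$ or $b$. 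The case $x \in A_2$ is symmetric.

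The main obstacle, such as it is, is purely bookkeeping in part (3): one must recognise that any hypothetical new two-block partition of $A'$ is forced to split $A_1$ or $A_2$ across the two new blocks, and then invoke edge-disjointness a second time to reach the contradiction. All three statements thus collapse to the same underlying principle.
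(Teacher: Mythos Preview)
Your proposal is correct and follows essentially the same approach as the paper: both arguments hinge on the observation that a $3$-subset of $A$ other than $A_1$ or $A_2$ must meet one of them in two vertices and hence cannot be a block, giving (1) and (2) at once. For (3) the paper notes that the surviving block (say $A_2$) must appear in any two-block partition of $A'$ by (2), whence the complement meets $A_1$ in two points and cannot be a block; your case split on whether $a,b$ lie in the same $B_i$ reaches the identical contradiction by a marginally different bookkeeping route.
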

\begin{proof} Given that $A$ partitions into $A_1$ and $A_2$, any other 3-subset of $A$
must intersect one of $A_1$ or $A_2$ in two elements.  This implies the other 3-subset
cannot be a block, thereby, proving (1) and (2).  Part (3) then follows because replacing
a single element of $A$ by an element not in $A$, leaves either $A_1$ or $A_2$ intact.
The latter would then have to be in any partition but the complement then intersects the
original block whose element is removed in two elements so it cannot be a block.    \end{proof}  

\begin{lemma}\label{two} A $\mathrm{PST}(n)$ with at most two disjoint blocks is
sequenceable.
\end{lemma}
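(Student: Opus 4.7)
By Lemma \ref{zero} we may assume two disjoint blocks exist; call them $B_1=\{1,2,3\}$ and $B_2=\{4,5,6\}$, and set $L=V\setminus(B_1\cup B_2)$. Edge-disjointness with $B_1,B_2$ forces any other block $B$ to satisfy $|B\cap B_1|\le 1$ and $|B\cap B_2|\le 1$, and the hypothesis that at most two blocks are pairwise disjoint forces $B\cap(B_1\cup B_2)\ne\emptyset$ (else $B,B_1,B_2$ are three pairwise disjoint blocks). Thus every additional block is of one of three types: Type~A (one vertex in $B_1$ and two in $L$), Type~B (one in $B_2$ and two in $L$), or Type~C (one in each of $B_1,B_2$ and one in $L$). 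Consequently the only inadmissible proper segments have length $3$ (a block) or length $6$ (a union of two disjoint blocks).

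If $L=\emptyset$, any block is a $3$-subset of $B_1\cup B_2$, and any such subset other than $B_1,B_2$ shares an edge with one of them; by edge-disjointness the only blocks are $B_1,B_2$, and the sequence $1,4,2,5,3,6$ is admissible because each $3$-segment contains two vertices of a common $B_i$ and the full sequence is not a proper segment.

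For $|L|\ge 1$ my plan is to use an interleaved sequence
\[
\pi = v_1,\,v_2,\,v_3,\,a,\,v_5,\,v_6,\,v_7,\,\ell_2,\,\ldots,\,\ell_{|L|},
\]
where $\{v_2,v_3,v_7\}=B_1$, $\{v_1,v_5,v_6\}=B_2$, $a$ is a distinguished vertex of $L$, and $\ell_2,\ldots,\ell_{|L|}$ enumerates $L\setminus\{a\}$. This arrangement ensures the middle $3$-segments $\{v_1,v_2,v_3\}$, $\{v_2,v_3,a\}$, $\{a,v_5,v_6\}$, $\{v_5,v_6,v_7\}$ each contain two vertices of a common $B_i$, so they share an edge with $B_i$ and are not blocks. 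The two middle $6$-segments $\{v_1,\ldots,v_6\}$ and $\{v_2,\ldots,v_7\}$ differ from $B_1\cup B_2$ by replacing $v_7$ or $v_1$ with $a$, so Lemma~\ref{six}(3) directly rules out any partition into two disjoint blocks. The only remaining dangerous middle $3$-segment is $\{v_3,a,v_5\}$, a Type~C candidate, which I eliminate by choosing the labeling so that $(v_3,v_5)$ lies outside the matching $N(a)\subseteq B_1\times B_2$ of pairs $(x,y)$ for which $[x,y,a]$ is a block.

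The main obstacle is the transition between the block portion and the tail: inadmissible $3$-segments may arise as $\{v_6,v_7,\ell_2\}$ (a Type~C) or $\{v_7,\ell_2,\ell_3\}$ (a Type~A), and straddling $6$-segments such as $\{v_3,a,v_5,v_6,v_7,\ell_2\}$ may partition as two disjoint Type~C blocks or as a Type~A plus Type~B pair. I plan to handle these via a greedy choice of the tail ordering, exploiting that edge-disjointness guarantees any fixed pair of vertices extends to at most one block, so each such constraint excludes at most one vertex of $L\setminus\{a\}$; together with the remaining freedom to relabel $B_1,B_2$ and to choose $a$, this typically suffices. In the tight cases (for example, when every $a\in L$ has a full matching $|N(a)|=3$), I would modify the construction by inserting a second $L$ vertex into the block portion, such as $\pi=v_1,v_2,a_1,v_3,v_5,a_2,v_6,v_7,\ell_3,\ldots$, so that every middle $3$-segment around each inserted vertex again shares an edge with $B_1$ or $B_2$; the very small residual cases can then be disposed of by exhibiting an explicit admissible sequence. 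The hypothesis that no three blocks are pairwise disjoint is used throughout the boundary analysis, as it bounds the density of Type~A/B/C blocks and so prevents obstructions from accumulating.
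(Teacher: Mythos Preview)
Your proposal is a sketch rather than a proof, and there is a genuine gap in the boundary analysis. You only address the straddling $6$-segment $\{v_3,a,v_5,v_6,v_7,\ell_2\}$, but your interleaving $v_1,v_2,v_3,a,v_5,v_6,v_7,\ell_2,\ell_3,\ldots$ with $B_1=\{v_2,v_3,v_7\}$ and $B_2=\{v_1,v_5,v_6\}$ also produces the $6$-segments $\{a,v_5,v_6,v_7,\ell_2,\ell_3\}$, $\{v_5,v_6,v_7,\ell_2,\ell_3,\ell_4\}$ and $\{v_6,v_7,\ell_2,\ell_3,\ell_4,\ell_5\}$, each of which can in principle split as a Type~A block through $v_7$ together with a Type~B block through $v_5$ or $v_6$. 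Nothing in the hypothesis ``at most two disjoint blocks'' forbids such a pair directly: a Type~A block is disjoint from $B_2$ and a Type~B block is disjoint from $B_1$, but the three blocks you would need for a contradiction are not automatically present. Your greedy claim that ``each constraint excludes at most one vertex'' does not cover these segments, since they involve up to four tail vertices simultaneously; in particular, when $|L|=5$ the last of these $6$-segments uses \emph{all} of $L\setminus\{a\}$ as a set, so reordering the tail cannot help at all. The fallback ``insert a second $L$ vertex'' and ``dispose of residual cases explicitly'' are not carried out, so as written the argument does not close.

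The paper avoids this entire difficulty by a different placement: it uses a sequence beginning $1,2,4,3,5,a,6,b,c,\ldots$, so that all of $B_1=\{1,2,3\}$ already sits in the first four positions. Consequently every $6$-segment starting at position~$5$ or later is disjoint from $B_1$, and if it partitioned into two blocks these together with $B_1$ would be three pairwise disjoint blocks, contradicting the hypothesis. This single observation eliminates all the ``deep'' straddling segments at once, leaving only the three initial $6$-segments and the one at position~$4$ to check by hand (with a labelling chosen so that $[5,u,v]$ and $[6,u,v]$ are never blocks for $u,v\in\{a,b,c\}$, and $[3,5,a],[3,a,b]$ are not blocks). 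Small orders $n=6,7,8$ are handled separately by explicit sequences. If you want to rescue your scheme, the cleanest fix is to rearrange so that one entire $B_i$ lies in a short prefix; otherwise you must give a full accounting of the four straddling $6$-segments and show the remaining labelling freedom suffices, which is considerably more work.
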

\begin{proof} Note that the only possible inadmissible segments in this case are 3-segments and 6-segments. 
If there are no disjoint blocks, then $\mathrm{PST}(n)$ is sequenceable by Lemma
\ref{zero}.  So let $B_1=[1,2,3]$ and $B_2=[4,5,6]$ be two disjoint blocks that have had
their elements arbitrarily labelled.  If $|V|=6$, there are no
additional blocks and $1,2,4,5,3,6$ is an admissible sequence.

If $|V|=7$, let $a$ be the additional vertex.  Additional blocks must include $a$ and such a block involves
an edge in the bipartite graph $K_{3,3}$ with parts $\{1,2,3\}$ and $\{4,5,6\}$.  Thus the maximum number of
additional blocks is three and corresponds to a perfect matching in $K_{3,3}$.   We may assume the additional
blocks are a subset of $\{[1,4,a],[2,5,a],[3,6,a]\}$ by suitably relabelling the elements of
$\{1,2,3\}$ and $\{4,5,6\}$.  An admissible sequence for all possibilities is $1,2,4,a,5,3,6$.

When $|V|=8$, let $a,b$ be the additional vertices. Because we have freedom of choosing which
block is $B_1$ and of labelling the vertices in the two blocks, we may make the following
assumptions.  If $a,b$ belong to a block, we may assume the block is
$[1,a,b]$.  If $3,a$ belong to a block, we may assume it is $[3,6,a]$.  Hence, the sequence
$1,2,4,3,a,5,6,b$ has no 3-segments that are blocks.  It is easy to verify that all three of the
6-segments are admissible. 

Now let $|V|>8$ and $V'=V\setminus\{1,2,3,4,5,6\}$.  There are no
blocks contained in $V'$ as this would give three mutually disjoint blocks contrary to the hypothesis.
Choose $A\subseteq V'$ so that $|A|=3$.  We are free to label the vertices of the two blocks
$[1,2,3]$ and $[4,5,6]$ so that neither $[5,u,v]$ nor $[6,u,v]$ are blocks for any $u,v\in A$.
We also may label the elements of $A$ with $a,b,c$ so that $[3,5,a]$ and $[3,a,b]$ are not blocks.

Let $\pi$ be any sequence beginning $1,2,4,3,5,a,6,b,c$.  It is easy to verify that all 3-segments
are admissible.  No 6-segment disjoint from $1,2,4,3$ is inadmissible as this would give three
mutually disjoint blocks.  The three 6-segments $1,2,4,3,5,a$; $2,4,3,5,a,6$ and $4,3,5,a,6,b$
are easily seen to be admissible.  The only remaining possible inadmissible 6-segment is
$3,5,a,6,b,c$ but it cannot be inadmissible because $[5,u,v]$ and $[6,u,v]$ are not blocks for
any $u,v\in A$.   This completes the proof.       \end{proof}

\medskip

The preceding two results arise from somewhat restrictive conditions.  A natural question is whether the
order of the $\mathrm{PST}(n)$ is bounded.  It is easy to see that the order is not bounded.  The
friendship graph is obtained by taking $m$ 3-cycles and amalgamating them at a common vertex.  the
resulting graph is a $\mathrm{PST}(2m+1)$ and every two blocks intersect in a single vertex.

The friendship graph provides an obvious recipe for obtaining partial Steiner triple systems of
arbitrarily large order such that there are $k$ disjoint blocks but not $k+1$.  Start with $k$
vertex-disjoint friendship graphs $G_1,G_2,\ldots,G_k$ so that there at least two blocks in each
$G_i$.  Then amalgmate $G_i$ and $G_{i+1}$ at a vertex of valency 2 for $i=1,2,\ldots,k-1$
making sure the block you use from $G_{i+1}$ for the amalgamation of $G_i$ and $G_{i+1}$
is different from the block of $G_{i+1}$ you use for $G_{i+1}$ and $G_{i+2}$.  It is easy to see
that there are $k$ disjoint blocks, and there cannot be $k+1$ disjoint blocks by the pigeon-hole
principle.

\section{Three Disjoint Blocks}

In the previous sections, we have shown that a partial Steiner triple system with at most two disjoint
blocks is sequenceable, and that there exists a non-sequenceable partial Steiner triple system with
four disjoint blocks.  This makes three disjoint blocks a focal point and, as we are about to see, the
resolution for the situation is lengthy.  We break up the argument into several lemmas depending
on the order of the $\mathrm{PST}(n)$.   

Reiterating the notation of the proof strategy described earlier, there are three disjoint blocks
denoted $B_1,B_2,B_3$ and the set $L$ of remaining vertices. 
 
\begin{lemma}\label{nine} A partial Steiner triple system of order 9 having three
disjoint blocks is sequenceable.
\end{lemma}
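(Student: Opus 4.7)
Because $|V|=9=3+3+3$, we have $V=B_1\cup B_2\cup B_3$, so the set $L$ is empty and every block of the $\mathrm{PST}(9)$ other than $B_1,B_2,B_3$ must be a transversal taking exactly one vertex from each $B_i$: any block containing two vertices of some $B_i$ would share the edge between them with $B_i$, contradicting edge-disjointness. Consequently a 3-segment of any sequence can be a block only when it equals some $B_i$ or has distribution $(1,1,1)$ across $(B_1,B_2,B_3)$. Moreover, by Lemma~\ref{six} an inadmissible 6-segment partitions uniquely into two blocks, and since an original block and a transversal always meet, the only possible distributions of such an inadmissible 6-set are a permutation of $(3,3,0)$ (the pair being two of $B_1,B_2,B_3$) or $(2,2,2)$ (the pair being two disjoint transversals).

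I then propose the specific sequence $\pi=1,2,4,5,7,6,3,8,9$. Its seven 3-segments have distributions $(2,1,0),(1,2,0),(0,2,1),(0,2,1),(1,1,1),(1,1,1),(1,0,2)$, so none can be a $B_i$, and the only 3-segments that could possibly be blocks are the two transversal candidates $\{3,6,7\}$ and $\{3,6,8\}$. Its four 6-segments have distributions $(2,3,1),(2,3,1),(1,3,2),(1,2,3)$, none of which is a permutation of $(3,3,0)$ nor equal to $(2,2,2)$, so by the classification above none of the 6-segments is inadmissible. Therefore $\pi$ is admissible provided that neither $\{3,6,7\}$ nor $\{3,6,8\}$ is a block.

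The labelling freedom inside each $B_i$ secures this last condition. By the Steiner property, the pair of labels $\{3,6\}$ lies in at most one block. I would choose any $v_1\in B_1$ to be labelled $3$ and any $v_2\in B_2$ to be labelled $6$. If no block of the PST contains $\{v_1,v_2\}$, label the remaining vertices arbitrarily. Otherwise let $\{v_1,v_2,w\}$ be the unique block through $\{v_1,v_2\}$, which is necessarily a transversal with $w\in B_3$; assign $w\mapsto 9$ and complete the remaining labels arbitrarily, so that $\{3,6,9\}$ is the unique block through the pair $\{3,6\}$. In either case neither $\{3,6,7\}$ nor $\{3,6,8\}$ is a block, and $\pi$ is admissible.

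\textbf{Main obstacle.} The delicate part is the choice of $\pi$: I want its 6-segment distributions to avoid both permutations of $(3,3,0)$ and the distribution $(2,2,2)$, so that the otherwise intricate Case~C of partitions into two disjoint transversals is ruled out a~priori without any further analysis; and I want its two transversal 3-segments to share two common labels, so that a single labelling choice in $B_3$ kills them simultaneously. The sequence $\pi$ above is found by inspection and satisfies both constraints at once.
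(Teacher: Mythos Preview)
Your proof is correct and follows the same underlying strategy as the paper's: fix a candidate sequence whose 6-segments can never be a union of two disjoint blocks, identify the one or two 3-segments that could conceivably be blocks, and use the labelling freedom inside the $B_i$ to avoid them.

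The differences are in execution rather than in idea. The paper uses $1,2,4,3,5,7,6,8,9$, which has a single transversal 3-segment $\{3,5,7\}$, and disposes of the 6-segments by appealing to Lemma~\ref{six}(3) (each 6-segment differs from some $B_i\cup B_j$ by a single point-swap). You use $1,2,4,5,7,6,3,8,9$, which has two transversal 3-segments $\{3,6,7\}$ and $\{3,6,8\}$; since they share the pair $\{3,6\}$, a single relabelling in $B_3$ kills both. Your ``distribution'' bookkeeping is a pleasant, self-contained way to see that no 6-segment can lie in the poset, since every block is either some $B_i$ or a $(1,1,1)$-transversal and hence an inadmissible 6-set must have type $(3,3,0)$ up to permutation or $(2,2,2)$; this avoids any appeal to Lemma~\ref{six}. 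One small wording point: when you write ``none of the 6-segments is inadmissible'' you mean that none of the underlying 6-sets lies in the poset; admissibility of the 6-\emph{segments} also requires that their 3-subsegments are not blocks, which you correctly secure in the next sentence.
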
 
\begin{proof} The three disjoint blocks are arbitrarily labelled $B_1,B_2,B_3$.  Arbitrarily label the
elements of $B_i$ with $3i-2,3i-1,3i$ for $i=1,2,3$.  Consider the sequence $1,2,4,3,5,7,6,8,9$.
It is easy to see that the only possible inadmissible 3-segment is 3,5,7. ( Note that if $[3,5,7]$ is
a block, that makes all the 6-segments inadmissible.)  

If 3,5,7 is admissible, then every 6-segment is admissible by Lemma \ref{six}.  This implies that
the sequence is admissible.  If $[3,5,7]$ is a block, then interchange the labels 2 and 3.  The
sequence $1,2,4,3,5,7,6,8,9$ is now admissible.    \end{proof}

\medskip

Some additional information is required when extra vertices are added.    
Let $\mathcal{B}$ be the set of blocks of $\mathcal{T}$, where the vertex set $V$ has
cardinality $n$.  The maximum number of blocks is discussed in~\cite{BB}
and is given by

\[|\mathcal{B}|=\left\{\begin{array}{ll}
\left\lfloor \frac{n}{3} \left\lfloor \frac{n-1}{2} \right\rfloor \right\rfloor&
\text{ if $n \equiv 0,1,2,3,4 \pmod{6}$} \\[5pt]
\left\lfloor \frac{n}{3} \left\lfloor \frac{n-1}{2} \right\rfloor \right\rfloor-1&
\text{ if $n \equiv 5 \pmod{6}$} \\
\end{array}\right.\]
This bound is sometimes called the \emph{Johnson-Sch\"onheim bound}.

With respect to a given partial triple system $\mathrm{PST}(n)$, we say that a subset
$M\subset V$ is {\it bad} if $V\setminus M$  can be partitioned into three disjoint blocks;
otherwise, we say the set is a {\it good} set.  We say
that three disjoint blocks $A,B,\mbox{ and }C$ {\it realize} a bad set $M$ if $A\cup B\cup C
=V\setminus M$.  Notice that if $M$ is bad, then $|M|=n-9$.  If $M=\{a\}$ is a singleton set,
we say that $a$ is a {\it bad} point or {\it good} point as appropriate.
 
\begin{lemma}\label{badpoints}  A partial Steiner triple system of order $10$ has at most
four bad points.
\end{lemma}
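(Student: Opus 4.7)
The plan is to combine a counting argument with Lemma~\ref{six}(3) and the Johnson--Sch\"onheim bound. Let $a_1,a_2,\ldots,a_k$ be the bad points of a $\mathrm{PST}(10)$, and for each $a_i$ fix a realizing partition $P_{a_i}$ of $V\setminus\{a_i\}$ into three mutually disjoint blocks. Together these partitions supply $3k$ block-instances drawn from $\mathcal{B}$. First I would establish the key claim that each block of $\mathcal{B}$ occurs in at most one of the partitions $P_{a_i}$.

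To prove this claim, suppose some block $X$ lies in both $P_a$ and $P_{a'}$ for distinct bad points $a,a'$. Then $a,a'\notin X$, so both belong to the $7$-element set $Y:=V\setminus X$. The remaining two blocks of $P_a$ partition the 6-set $Y\setminus\{a\}$ into two blocks, while the remaining two blocks of $P_{a'}$ partition the 6-set $Y\setminus\{a'\}$ into two blocks. However, $Y\setminus\{a'\}$ is obtained from $Y\setminus\{a\}$ by replacing $a'\in Y\setminus\{a\}$ with $a\notin Y\setminus\{a\}$, so Lemma~\ref{six}(3) forbids this second partition, a contradiction.

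Having established the claim, $3k\le|\mathcal{B}|$. Since $n=10\equiv 4\pmod{6}$, the Johnson--Sch\"onheim bound gives $|\mathcal{B}|\le\bigl\lfloor\tfrac{10}{3}\lfloor\tfrac{9}{2}\rfloor\bigr\rfloor=13$, so $3k\le 13$ and hence $k\le 4$, as required.

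The main obstacle is really just pinpointing the right application of Lemma~\ref{six}(3); once one notices that the two 6-sets associated with a shared block $X$ differ by a single swap of elements from $Y=V\setminus X$, the rest is bookkeeping. I do not expect edge cases: the argument only requires that $a\ne a'$ and that the 10-element ground set be large enough for the complement of $X\cup\{a\}$ to have exactly six elements, both of which hold automatically.
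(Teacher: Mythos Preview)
Your proof is correct and follows essentially the same approach as the paper: both combine the Johnson--Sch\"onheim bound $|\mathcal{B}|\le 13$ with the fact that a single block cannot occur in the realizing partitions of two distinct bad points, and both derive that fact from Lemma~\ref{six}. The only cosmetic difference is that you invoke part~(3) of Lemma~\ref{six} directly on the two $6$-sets $Y\setminus\{a\}$ and $Y\setminus\{a'\}$, whereas the paper argues via part~(2) to force the second partition to reuse one of the first partition's blocks and then obtains two distinct blocks sharing two points; your formulation is a slight streamlining of the same idea.
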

\begin{proof}  Let $\mathcal{T}$ be a partial triple system of order 10 and assume there are five
bad points.  The Johnson-Sch\"onheim bound implies that $\mathcal{T}$ has at most 13
blocks.  Hence, by the pigeonhole principle, there must be a block $A$ and a pair of bad
points $v,w$, such that $A\cup B \cup C=V\setminus\{v\}$ and $A\cup D\cup E=
V\setminus\{w\}$ for some blocks $B,C,D,E$.

Assume without loss of generality that $v\in D$ and $w\in B$. Then $E\subset B\cup C$,
which implies $E\in \{B,C\}$ by Lemma \ref{six}. But then by counting points we see
that $|B\cap D|=2$, and this is a contradiction.

Therefore, there are  at most four bad points completing the proof.    \end{proof}

\begin{lemma}\label{ten} A $\mathrm{PST}(10)$ with three disjoint blocks is sequenceable.
\end{lemma}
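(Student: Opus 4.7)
The strategy is to extend the proof of Lemma~\ref{nine} to accommodate the single extra vertex $a\in L$, paying attention to the new kind of inadmissible proper segment that order~10 permits, namely a 9-segment.

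First I would identify the new obstruction. A proper 9-segment in a length-10 sequence is obtained by deleting either the first or the last vertex, so such a segment is the union of three vertex-disjoint blocks precisely when the deleted vertex is bad. Since $V\setminus\{a\}=B_1\cup B_2\cup B_3$, the point $a$ is automatically bad, and hence $a$ cannot occupy position $s_1$ or $s_{10}$ in any admissible sequence.

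Next I would invoke Lemma~\ref{badpoints}: $\mathcal{T}$ has at most four bad points in total, so at most three of the nine vertices in $B_1\cup B_2\cup B_3$ are bad, leaving at least six good vertices among them. A quick count shows that either every $B_i$ contains a good vertex, or one $B_i$ is entirely bad and the other two are entirely good. Using the freedom to permute the labels of the three blocks and to relabel the vertices within each block, I would arrange that the vertices eventually placed at $s_1$ and $s_{10}$ are good, ideally drawing one from $B_1$ and the other from $B_3$.

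Third, I would adapt the admissible order-9 sequence $1,2,4,3,5,7,6,8,9$ from Lemma~\ref{nine} by inserting $a$ at an interior position, for instance producing a candidate such as $1,2,4,3,5,a,7,6,8,9$. The 3-segments and 6-segments that avoid $a$ are handled as in Lemma~\ref{nine} (possibly after swapping the labels 2 and 3 should $[3,5,7]$ happen to be a block); for those involving $a$, I would verify that none of the new 3-segments is a block of $\mathcal{T}$, adjusting the insertion point of $a$ or swapping labels within its two neighbouring blocks whenever an obstruction appears. A 6-segment through $a$ can be inadmissible only via a unique partition into two blocks by Lemma~\ref{six}, so these are ruled out by a short case check; any 6-segment avoiding $a$ that partitions into two blocks would, together with the remaining block, violate the hypothesis of at most three mutually disjoint blocks.

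The main obstacle I anticipate is the degenerate case in which one entire block is bad. Then only two blocks supply good vertices for the endpoints, and the flexibility built into the scheme above is reduced. I would handle this case separately, exploiting the Johnson-Sch\"onheim bound together with the strong constraint that every vertex of a whole block, when removed, leaves a set partitionable into three disjoint blocks. This forces so much additional block structure on $\mathcal{T}$ that one can verify directly, by a small case analysis, that a suitable ten-term sequence still exists.
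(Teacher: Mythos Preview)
Your high-level plan coincides with the paper's: invoke Lemma~\ref{badpoints} to locate enough good points, place good points at positions $s_1$ and $s_{10}$ so that both 9-segments are admissible, then control the 3- and 6-segments by relabelling inside blocks. Where you diverge is in the shape of the sequence, and that choice is exactly what creates the ``degenerate case'' you worry about.

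You insert $a$ into the order-9 template to get $1,2,4,3,5,a,7,6,8,9$, drawing the endpoints from \emph{different} blocks $B_1$ and $B_3$. The paper instead uses $1,4,5,7,6,8,9,3,a,2$, with \emph{both} endpoints $1,2\in B_1$. Since at most one block can have fewer than two good points (six good points among nine force this), the paper labels that worst block $B_3$; then $B_1$ automatically supplies two good endpoints and there is no degenerate case at all. With the sequence fixed, the paper pins down the remaining labels by a short list of non-block conditions (choose $B_1$ so that $[9,x,a]$ is never a block for $x\in B_1$; choose $6,8$ so that none of $[3,6,8],[3,6,9],[3,6,a]$ are blocks; swap $1,2$ if needed so $[2,6,9]$ is not a block) and then checks the two relevant 6-segments directly.

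Your proposal, by contrast, leaves real work undone. In the main case you have three potentially bad 3-segments $\{3,5,a\},\{5,a,7\},\{a,7,6\}$ and several 6-segments through $a$ whose possible partitions you do not enumerate; you assert that swaps will fix them but never verify this can be done without disturbing the goodness of $s_1$ and $s_{10}$. The degenerate case is deferred entirely to an unspecified ``small case analysis''. Finally, the remark that a 6-segment avoiding $a$ which partitions into two blocks would ``together with the remaining block'' yield four disjoint blocks is not correct: the complementary three points of $\{1,\dots,9\}$ need not form a block, and even if they did you would have a second partition of $\{1,\dots,9\}$ into three blocks, not four disjoint blocks. (In your particular sequence this is moot, since $a$ sits at position~6 and every 6-segment contains it.) The fix is simple: adopt the paper's trick of taking both endpoints from a single block known to contain two good points.
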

\begin{proof} Let $|V|=10$ and label the single point of $L$ with $a$.  We know there are at 
most four bad points from Lemma \ref{badpoints}.  Because $a$ itself is a bad point, there
are at least six good points among the remaining nine points.  If there is a block with fewer
than two good points, label this block $B_3$; otherwise, arbitrarily label a block as $B_3$.  
Thus, we know the other two blocks each have at least two good points.    

Label an arbitrary point of $B_3$ with 9 and label another block $B_1$ so that $[9,x,a]$ is
not a block for $x\in B_1$.  Arbitrarily label the points of $B_1$ with 1, 2, 3 so that the bad
point (if there are any) has label 3. This means that both 1 and 2 are good points.  In the
block $B_2$, label the points with 4, 5, 6 and in the block $B_3$ label the other two points
with 7 and 8 so that none of $[3,6,8], [3,6,9]\mbox{ or }[3,6,a]$ are blocks.  Finally, from
the good points 1 and 2, relabel the points, if necessary, so that $[2,6,9]$ is not a block.

We claim the sequence $1,4,5,7,6,8,9,3,a,2$ is admissible.  All of the 3-segments are admissible
because the labelling is chosen so that $[9,3,a]$ is not a block.  The two 9-segments are
admissible because 1 and 2 are good points.

The only two possible inadmissible 6-segments are $7,6,8,3,9,a$ and $6,8,3,\\9,a,2$.  The
first is not inadmissible because Lemma \ref{six} implies that $[3,6,a]$ is a block if it is
inadmissible and the latter is not the case.  The only possible partitions of the second
6-segment into two blocks are $[2,8,a]\cup[3,6,9]$ and $[3,8,a]\cup[2,6,9]$, but neither
$[3,6,9]$ nor $[2,6,9]$ are blocks and this completes the verification that $\mathcal{T}$
is sequenceable when there are three disjoint blocks and $|V|=10$.   \end{proof}

\bigskip

Now that we are considering three disjoint blocks, some sets of cardinality nine have
partitions into three blocks.  This type of set is more complicated than the sets of cardinality
six that have partitions into two blocks.  Nevertheless, it would be useful if we could find a
result for nine points that is analogous to the powerful Lemma \ref{six} for six points.  Towards
that end we introduce some notation.

If $A_1$ and $A_2$ are disjoint blocks in a $\mathrm{PST}(n)$, let $K_{A_1,A_2}$ denote the
complete bipartite graph of order 6 with parts $A_1$ and $A_2$.  Let $A$ be a set of cardinality
9 containing both $A_1$ and $A_2$ as subsets, and let $A_3=A\setminus(A_1\cup A_2)$.  

If $A_3$ also is a block, then $A_1,A_2,A_3$ is a partition of $A$ into three blocks.  On the
other hand, if $A_3$ is not a block, then there can be no partition of $A$ having either $A_1$
or $A_2$ as a part because of Lemma \ref{six}.  Thus, given any partition of $A$ into three
blocks in the latter situation, every block in such a partition must intersect each $A_1$ and $A_2$
in a single point.  We may interpret these intersections as a perfect matching in  $K_{A_1,A_2}$
and say that the partition {\it induces} the perfect matching.

\begin{lemma}\label{ipm} Let $A_1$ and $A_2$ be disjoint blocks in a partial Steiner triple system
$\mathcal{T}$ and let $V'=V\setminus(A_1\cup A_2)$.  The following three properties hold in $\mathcal{T}$.

{\rm (1).} If $\{\alpha,\beta,\gamma,x\}$ is a set of cardinality 4 in $V'$ containing no block with
$x$ as a member, then at most two sets of cardinality 9 containing $x,A_1\mbox{ and }A_2$
plus two members from $\{\alpha,\beta,\gamma\}$ have a partition into blocks.

{\rm (2).} If $\{\alpha,\beta,\gamma,x,y\}$ is a set of cardinality 5 in $V'$ containing no block
with both $x$ and $y$ as members, then to within labelling there is a unique way that the three
sets of cardinality 9 containing $x,y,A_1, A_2$ plus one element from $\{\alpha,\beta,\gamma\}$
have a partition into blocks.

{\rm (3).} Let $A$ be a 9-set with a partition into three blocks $A_1,A_2\mbox{ and }A_3$
and let $A'$ be any 9-set formed by replacing a point of $A_3$ with a point not in $A$.  If
either $A_1$ or $A_2$ must appear in a partition of $A'$ into three blocks, then $A'$ has no
partition into three blocks. 
\end{lemma}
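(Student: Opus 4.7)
The plan is to exploit Lemma~\ref{six}, particularly its part~(3), together with the structure of the bipartite graph $K_{A_1,A_2}$. For each point $z\in V\setminus(A_1\cup A_2)$ let $M_z$ denote the set of pairs $pq$ with $p\in A_1$, $q\in A_2$ such that $[z,p,q]$ is a block; edge-disjointness of the blocks makes each $M_z$ a matching of $K_{A_1,A_2}$, and makes $M_z, M_{z'}$ edge-disjoint for distinct $z\neq z'$. Under the hypotheses of parts~(1) and~(2), no 3-set drawn from the ``extra'' vertices is a block, so by Lemma~\ref{six} a partition of any 9-set of the form $A_1\cup A_2\cup\{r,s,t\}$ into three blocks has each block meeting $A_1$, $A_2$, and $\{r,s,t\}$ in exactly one point; the three edges used between $A_1$ and $A_2$ then form a perfect matching of $K_{A_1,A_2}$.

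Part~(3) follows almost immediately from Lemma~\ref{six}(3). If some partition of $A'$ into three blocks contained $A_1$, then $A'\setminus A_1$ would partition into two blocks; however $A'\setminus A_1$ is obtained from $A\setminus A_1=A_2\cup A_3$ (which itself partitions as $A_2,A_3$) by replacing a single point of $A_3$ with a point outside $A$, contradicting Lemma~\ref{six}(3). The same argument excludes $A_2$, so under the hypothesis that $A_1$ or $A_2$ must appear in any partition of $A'$, no partition exists.

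For part~(2), suppose all three 9-sets obtained by adjoining one of $\alpha,\beta,\gamma$ to $A_1\cup A_2\cup\{x,y\}$ partition into blocks. If the blocks through $x$ in two of these partitions coincided, deleting that common block would yield two 6-sets with partitions that differ by a single point, contradicting Lemma~\ref{six}(3); hence the three blocks through $x$ are distinct, making $M_x$ a perfect matching of $K_{A_1,A_2}$, and similarly $M_y$ is a perfect matching. Since $[x,p,q]$ and $[y,p,q]$ would share edge $pq$, $M_x\cap M_y=\emptyset$, and $N:=K_{A_1,A_2}\setminus(M_x\cup M_y)$ is a third perfect matching. The block through $\alpha,\beta$, or $\gamma$ in each partition must use an edge in $N$; by edge-disjointness of the $M_z$'s and $|N|=3$, each of $M_\alpha,M_\beta,M_\gamma$ supplies exactly one edge of $N$, and up to relabelling this is the unique way for all three 9-sets to have partitions.

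Part~(1) follows a similar template but needs a more involved finish. Assuming all three 9-sets have partitions, Lemma~\ref{six}(3) again forces the blocks through $x$ in the three partitions to be pairwise distinct, so $M_x$ is a perfect matching; applied to each vertex $z\in\{\alpha,\beta,\gamma\}$, which appears in exactly two of the three 9-sets, it also forces $|M_z|\geq 2$. Since $M_x,M_\alpha,M_\beta,M_\gamma$ are pairwise edge-disjoint inside the 9-edge graph $K_{A_1,A_2}$, equality holds throughout and these matchings partition $E(K_{A_1,A_2})$. The three induced perfect matchings $m_1,m_2,m_3$ from the partitions are then precisely the three perfect matchings of $K_{A_1,A_2}$ sharing one edge with $M_x$; removing the $M_x$-edge from each yields the three pairs of antipodal edges of the 6-cycle $K_{A_1,A_2}\setminus M_x$. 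The main obstacle is the ensuing case analysis: each antipodal pair must split, one edge going to each of the two $\{\alpha,\beta,\gamma\}$-members appearing in the corresponding partition, and one checks by a short enumeration that every such splitting leaves some $M_z$ with two edges sharing a vertex, contradicting $M_z$ being a matching. Hence not all three 9-sets can have partitions, so at most two do.
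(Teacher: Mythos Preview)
Your proof is correct and follows essentially the same approach as the paper: both arguments reduce to labelling the edges of $K_{A_1,A_2}$ by the outside points, use Lemma~\ref{six} to force each induced partition to yield a perfect matching, establish that the $x$-edges form a perfect matching, and then derive a contradiction from the matching constraints on the remaining $6$-cycle. Your notation $M_z$ and the counting $|M_x|+|M_\alpha|+|M_\beta|+|M_\gamma|\le 9$ tidy up the bookkeeping, and your ``antipodal pairs'' formulation in part~(1) packages the same case analysis the paper carries out in explicit coordinates.
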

\begin{proof} Consider (1) first.  There are three sets of the form described in the statement and
note that none of them have a partition with either $A_1$ or $A_2$ as a part because Lemma
\ref{six} would force a block containing $x$ and two elements from $\{\alpha,\beta,\gamma\}$.
Assume each of the three sets has a partition into three blocks which then induces three perfect
matchings in $K_{A_1,A_2}$.  We leave it up to the reader to verify that the three perfect
matchings are mutually edge-disjoint.

Label an edge of  $K_{A_1,A_2}$ with the other point in the block containing the edge.  We may
label the points of $A_1$ with $u_1,u_2,u_3$ and the points of $A_2$ with $v_1,v_2,v_3$ so
that the three edges with label $x$ are $u_1v_1,u_2v_2\mbox{ and }u_3v_3$.  The remaining
edges form a 6-cycle.  

In the partition with the block $[x,u_3,v_3]$, the edges for the other two parts must be
$u_1v_2$ and $u_2v_1$ because the three edges form a 3-matching.  Without loss of generality
we may assume $u_1v_2$ is labelled $\alpha$ and $u_2v_1$ is labelled $\beta$.  The partition
with the block $[x,u_2,v_2]$ forces the other two edges to be $u_1v_3$ and $u_3v_1$ with
one of them labelled $\gamma$.  Suppose it is $u_1v_3$.  Then the edge $u_3v_1$ must be
labelled $\alpha$.  This means that the partition containing the block $[x,u_1,v_1]$ must
involve the points $\beta$ and $\gamma$ but the edge $u_2v_3$ cannot receive either of
these two labels.  This contradiction proves (1).

For part (2), it is easy to see that there is no partition of any of the sets of the hypothesised form into
blocks with either $A_1$ or $A_2$ as parts.  So if all three have partitions, then there are
three edge-disjoint perfect matchings arising again but this time there is a perfect matching
with edges labelled $x$ and a perfect matching whose edges are labelled $y$.  The union
of these two perfect matchings is a 6-cycle and we may label points so that the 6-cycle is
$u_1v_1u_2v_2u_3v_3$ with $u_1v_1$ labelled $x$ (which determines the other labels).
The remaining edges are $u_1v_2$ labelled $\alpha$, $u_2v_3$ labelled $\beta$ and $u_3v_1$
labelled $\gamma$.  The actual partition is determined uniquely from this labelling.
This proves (2)

If $A_1$ must be in a partition of $A'$, then $A_2$ must be a block as well by Lemma
\ref{six}.  This would force the final block to contain two points of $A_3$ which is not
possible.  Part (3) then follows.    \end{proof}

\begin{lemma}\label{eleven} If $\mathcal{T}$ is a partial Steiner triple system of order 11 having
three disjoint blocks, then $\mathcal{T}$ is sequenceable.
\end{lemma}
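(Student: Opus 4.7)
The plan is to follow the strategy of Lemmas~\ref{nine} and~\ref{ten}: choose a candidate sequence $\pi=s_1,\dots,s_{11}$ on the 11 vertices and relabel to make every proper segment admissible. Write $B_1=[1,2,3]$, $B_2=[4,5,6]$, $B_3=[7,8,9]$, and $L=\{a,b\}$. Since $|V|=11$, the only proper segments of concern have length 3, 6, or 9 (with 9, 6, and 3 such segments respectively). The labelling freedom consists of the $6^3\cdot 2=432$ relabellings obtained by permuting labels within each $B_i$ and swapping $a\leftrightarrow b$. I would select $\pi$ so that some vertex of $L$ is interleaved between vertices of $B_2$ and $B_3$ (so that no $B_i\cup B_j$ occurs as a consecutive 6-segment), and so that none of $\{s_1,s_2\}$, $\{s_{10},s_{11}\}$, $\{s_1,s_{11}\}$ equals $\{a,b\}$ (so that $B_1\cup B_2\cup B_3$ does not appear as a 9-segment). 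A concrete initial candidate is $\pi=1,4,5,7,a,6,8,9,3,b,2$.

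Verification proceeds by segment length. Any 3-segment containing two vertices of a common $B_i$ is automatically not a block; the few remaining 3-segments (involving at most one vertex from each of $B_1,B_2,B_3$ together with elements of $L$) are excluded by suitable choices of labels. For each of the six 6-segments, Lemma~\ref{six} guarantees that any partition into two blocks is unique, so the 6-segment contributes at most one labelling condition --- typically the requirement that a specific triple involving $a$ or $b$ (such as $[1,7,a]$ or $[3,6,a]$) not be a block. For each of the three 9-segments, I would invoke Lemma~\ref{ipm}: each 9-segment $V\setminus\{u,v\}$ contains all of some $B_k$ together with most of two other blocks and $\{a,b\}$, and Lemma~\ref{ipm}(3) combined with Lemma~\ref{six} forces any partition into three blocks either to contain two of the $B_i$ (with the third block being a specific triple that labelling can prevent from being a block, such as $[1,3,a]$ or $[3,a,b]$) or to induce a tightly constrained edge-labelled perfect matching in some $K_{B_i,B_j}$ of the form described in Lemma~\ref{ipm}(1)--(2). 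Each such configuration imposes one labelling condition.

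The main obstacle will be the simultaneous satisfaction of all these labelling conditions. Individually each is easy to arrange, but the 9-segment conditions tie together several labels across $B_1$, $B_2$, $B_3$, and $L$, and they may interact with the 3- and 6-segment conditions through shared use of $a$ and $b$. I expect a careful order-of-labelling argument --- first fix an element of $B_3$ to defeat the most restrictive 9-segment condition, then fix labels in $B_1$ (placing any ``forced'' bad label at position $3$, as in Lemma~\ref{ten}), then in $B_2$, and finally name $a$ and $b$ --- combined with a short case analysis on which of the candidate triples are actually blocks in $\mathcal{T}$. Because the 432 available relabellings vastly exceed the number of forbidden configurations, the argument should close; if the initial candidate $\pi$ cannot be made admissible in some extremal sub-case, a single local swap (for example interchanging the positions of $3$ and $b$, or of $a$ and $6$) should suffice. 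The resulting proof will be finite but inevitably somewhat tedious, with the Lemma~\ref{ipm} machinery doing the genuine work on the 9-segments.
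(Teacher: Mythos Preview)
Your proposal is in the right spirit but it is not yet a proof, and it misses the device that makes the paper's argument go through.

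The paper does \emph{not} use Lemma~\ref{ipm} here. Instead it recycles Lemma~\ref{badpoints}: the two residual systems on $V\setminus\{a\}$ and on $V\setminus\{b\}$ are $\mathrm{PST}(10)$'s, each with at most four bad points among $\{1,\dots,9\}$, hence at least six good points each. By pigeonhole some point is good for both residuals; label it $9$ and place it at one end of the sequence. Then pick $B_1$ to contain two points good for $V\setminus\{b\}$ and label them $1,2$. With the sequence $\pi=b,1,2,4,3,5,7,6,8,a,9$ all three $9$-segments are now automatically admissible (their complements are $\{a,9\}$, $\{b,9\}$, $\{b,1\}$), and only the central block $3,5,7,6,8,a$ and its two $3$-subsegments remain, handled by a two-line relabelling case split using Lemma~\ref{six}.

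Your plan, by contrast, leaves a genuine gap at the $9$-segments. First, for your candidate $\pi=1,4,5,7,a,6,8,9,3,b,2$ the $9$-segment with complement $\{1,4\}$ contains only one full $B_i$ (namely $B_3$), so the perfect-matching framework of Lemma~\ref{ipm} does not apply to it directly; you would need a separate ad~hoc argument there. Second, and more importantly, the sentence ``the $432$ available relabellings vastly exceed the number of forbidden configurations, so the argument should close'' is a heuristic, not a proof. The constraints coming from the three $9$-segments are not independent items to be counted against relabellings: they share the labels $a,b$ and several $B_i$-labels, and Lemma~\ref{ipm} tells you only that certain perfect matchings are restricted, not that a single relabelling kills all of them simultaneously together with the $3$- and $6$-segment conditions. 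You never exhibit, or prove the existence of, a compatible labelling; ``a single local swap \dots\ should suffice'' is an expectation, not an argument. The paper's bad-point trick is exactly the tool that decouples the $9$-segment conditions from the rest and makes the simultaneous satisfaction immediate.
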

\begin{proof} Let $|V|=11$ and arbitrarily label the points of $L$ with $a$ and $b$.  There are
at least six good points for the residual partial triple system on $V\setminus\{a\}$ and at least
six good points points for the residual partial triple system on $V\setminus\{b\}$. Thus, there is
a point that is good for both residual triple systems because the points are coming from a set of
cardinality 9.  Label the block containing such a point $B_3$ and label the point with 9.  Note
that this implies that $\{9,b\}$ is a good set for $V$.

We know there is another block containing two points that are good points for
$V\setminus\{b\}$ because there are at least six altogether.  Label such a block $B_1$ and
label two good points arbitrarily with 1 and 2 (thereby labelling 3).  Arbitrarily label the
points of $B_2$ with 4, 5 and 6, and the remaining two points of $B_3$ with 7 and 8.

Consider the sequence $\pi=b,1,2,4,3,5,7,6,8,a,9$.  Because 1 is a good point for $V\setminus\{b\}$,
the 9-segment $2,4,3,5,7,6,8,a,9$ cannot be partitioned into three blocks.  Because 9 is a
good point for both $V\setminus\{a\}$ and $V\setminus\{b\}$, neither of the other two
9-segments can be partitioned into three blocks.

The only possible remaining inadmissible segments are the two 3-segments $3,5,7$ and
$6,8,a$, and the 6-segment $3,5,7,6,8,a$.  First, let the 6-segment be admissible.  If both
$3,5,7$ and $6,8,a$ are admissible, then $\pi$ is admissible.  So assume that $[3,5,7]$ is
a block.  If $[5,8,a]$ is not a block, then interchanging the labels of 5 and 6 makes $\pi$
admissible.  If $[5,8,a]$ is a block, then interchange the labels of 7 and 8 and we again have
the resulting $\pi$ admissible.  We do a similar thing if we start with $[6,8,a]$ being a block.  

Second, let $3,5,7,6,8,a$ have a partition into two blocks.  Then interchange the labels of
4 and 5 giving us an admissible segment $3,5,7,6,8,a$ by Lemma \ref{six}.  We are back at
the preceding situation and this completes the proof.     \end{proof} 

\medskip

The next lemma sets the stage for the completion of the verification for three vertex-disjoint
blocks.  Of course, it does force us to concentrate on partial Steiner triple system of order 12. 

\begin{lemma}\label{stage} Let $\mathcal{T}$ be a partial Steiner triple system of order 12 with
three vertex-disjoint blocks but not four vertex-disjoint blocks, and let $$[1,2,3],[4,5,6],
[7,8,9],\{a,b,c\}$$ be a partition of $V$ into three blocks and three points.  If there is an
admissible sequence of the form $1,2,4,3,5,7,6,8,a,9,b,c$, then all partial Steiner triple systems
of order at least 13 having at most three vertex-disjoint blocks are sequenceble.
\end{lemma}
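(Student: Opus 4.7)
The plan is to extend the admissible 12-sequence supplied by the hypothesis into an admissible sequence for the whole $\mathrm{PST}(n)$, by appending the extra vertices in any order. Let $\mathcal{T}$ be a $\mathrm{PST}(n)$ with $n\geq 13$ and at most three vertex-disjoint blocks. If $\mathcal{T}$ has at most two disjoint blocks, Lemma~\ref{two} applies, so I may assume there are three disjoint blocks $B_1,B_2,B_3$. Set $L=V\setminus(B_1\cup B_2\cup B_3)$, pick arbitrary $a,b,c\in L$, and let $\mathcal{T}'$ be the subsystem of $\mathcal{T}$ induced on $V_{12}=B_1\cup B_2\cup B_3\cup\{a,b,c\}$. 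Any four disjoint blocks in $\mathcal{T}'$ would also be four disjoint blocks in $\mathcal{T}$, so $\mathcal{T}'$ is a $\mathrm{PST}(12)$ with three disjoint blocks but not four, and the hypothesis supplies a labelling of $V_{12}$ for which
\[\pi_{12}=1,2,4,3,5,7,6,8,a,9,b,c\]
is admissible for $\mathcal{T}'$. I would then form $\pi_n$ by appending the vertices of $L\setminus\{a,b,c\}$ to $\pi_{12}$ in any order.

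To verify that $\pi_n$ is admissible it suffices to examine proper segments of length $3$, $6$, and $9$, since a proper segment of length at least $12$ that partitions into vertex-disjoint blocks would yield at least four disjoint blocks and contradict the hypothesis. Segments lying inside the first twelve positions are admissible because the blocks of $\mathcal{T}$ contained in $V_{12}$ are exactly the blocks of $\mathcal{T}'$. Segments contained entirely in $L$ admit no block partition, since any block lying in $L$ together with $B_1,B_2,B_3$ already forms a fourth disjoint block. Only the segments that extend past position $12$ and still meet $B_1\cup B_2\cup B_3$ remain.

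The decisive observation is that because the vertices $a,9,b,c$ occupy positions $9,10,11,12$ of $\pi_{12}$, every $9$-segment of $\pi_n$ that starts at a position $\geq 5$ lies in $\{5,6,7,8,9\}\cup L$, and every $6$-segment of $\pi_n$ that starts at a position $\geq 8$ lies in $\{8,9\}\cup L$. Hence every straddling $9$-segment is disjoint from $B_1$ and every straddling $6$-segment is disjoint from both $B_1$ and $B_2$. A hypothetical three-block partition of a straddling $9$-segment would then consist of three blocks disjoint from $B_1$, yielding four pairwise disjoint blocks in $\mathcal{T}$; a hypothetical two-block partition of a straddling $6$-segment would consist of two blocks disjoint from both $B_1$ and $B_2$, again yielding four disjoint blocks. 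Both contradict the hypothesis. Straddling $3$-segments lie entirely in $L$ and so are covered by the $L$-segment argument.

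The main obstacle, such as it is, is the position bookkeeping underlying the decisive observation: one must verify directly that for this specific $\pi_{12}$ every $9$-segment beginning at position $5$ or later, and every $6$-segment beginning at position $8$ or later, really is confined to the stated sets. Once those inclusions are confirmed, the proof runs uniformly for every $\mathrm{PST}(n)$ and for every ordering of the appended tail $L\setminus\{a,b,c\}$, with no further relabelling or perturbation of $\pi_n$ ever required.
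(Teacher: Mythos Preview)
Your argument is correct and follows the same approach as the paper: append the remaining points of $L$ arbitrarily to the admissible $12$-sequence, observe that proper segments of length $\geq 12$ would already yield four disjoint blocks, and then rule out straddling $3$-, $6$-, and $9$-segments by noting they miss $B_1\cup B_2\cup B_3$, $B_1\cup B_2$, and $B_1$ respectively, so a block partition would again produce four disjoint blocks. Your position bookkeeping is more explicit than the paper's, but the content is identical.
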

\begin{proof} Because of Lemmas \ref{zero} and \ref{two}, only partial Steiner triple systems with three
vertex-disjoint blocks, but not four, need be considered.  Consider a partial Steiner triple system
$\mathcal{T}'$ of order at least 13.  Choose a residual partial Steiner triple system consisting of the
three blocks $[1,2,3],[4,5,6],[7,8,9]$ and three other arbitrary points $a,b,c$.  Let $1,2,4,3,5,7,
6,8,a,9,b,c$ be an admissible sequence for the residual partial Steiner triple system.

Form a new sequence $\pi$ by adding an arbitrary permutation of the remaining points
$V\setminus\{1,2,3,4,5,6,7,8,9,a,b,c\}$ to the end of the given sequence.  We claim that
$\pi$ is an admissible sequence for the partial Steiner triple system $\mathcal{T}'$.

The sequence $\pi$ has no inadmissible 3-segments because such a segment would have to
start with $b$ or further along the sequence $\pi$.  However, this produces four disjoint
blocks in $\mathcal{T}'$ because none of $\{1,2,3,\ldots,9\}$ have been used.  Any
inadmissible 6-segment would have to begin with 8 or further along the sequence.  This
again would produce four disjoint blocks.  A similar argument works for 9-segments
because none of $\{1,2,3\}$ would be involved.  Therefore, $\pi$ is admissible for $\mathcal{T}'$
and the result follows.     \end{proof}

\section{Order Twelve}

This section deals with partial Steiner triple systems of order 12 having three vertex-disjoint blocks
but not four. This is a separate section because the proof that they are sequenceable is lengthy
and intricate.   We continue to use the notation that there are three disjoint blocks which will at
some point be labelled $B_1,B_2\mbox{ and }B_3$ and a set $L$ of three additional points.
The points of $B_1,B_2\mbox{ and }B_3$ will be labelled with $\{1,2,3\},\{4,5,6\}
\mbox{ and }\{7,8,9\}$, respectively, and the points of $L$ will be labelled $a,b,c$.

Throughout this proof we work to prove that there is a labelling of the points so that
the sequence $$\pi=1,2,4,3,5,7,6,8,a,9,b,c$$ is admissible.  This is done by providing an
initial labelling of the blocks as $B_1,B_2\mbox{ and }B_3$ and this labelling does not change.
The elements within the blocks will be given an initial labelling following the guidelines of the
preceding paragraph and various relabellings may take place as we work through the proof. 

We now provide an outline of the proof comprising this section.  The given sequence $\pi$
contains four 3-segments, three 6-segments and four 9-segments which may be inadmissible.
They are:
\begin{itemize}
\item 3-segments: $3,5,7$;\:\:$6,8,a$;\;\;$a,9,b;$\;\;and $9,a,b$;
\item 6-segments: $3,5,7,6,8,a$;\;\;$7,6,8,a,9,b$;\;\;and $6,8,a,9,b,c$; and
\item 9-segments:  all four of them.
\end{itemize}  
The first result below gives a flexible choice for a particular 6-segment and
this 6-segment is of interest because it contains two of the possible inadmissible 3-segments.
Thus, if the 6-segment is admissible, all the 3-segments contained in it are admissible.

We then move to considering 9-segments.  Of course, the 9-segments are the most complicated
because they may have more than one partition into blocks, whereas, an inadmissible 6-segment
has a unique partition into blocks and a 3-segment either is or is not a block itself.  We give two
results that provide a way of guaranteeing that none of the four 9-segments have partitions
into three disjoint blocks and leave only a couple of potential problematical blocks.  To eliminate
the problematical blocks, we consider a variety of cases.

\begin{lemma}\label{9flex} Arbitrarily label the blocks $B_1,B_2\mbox{ and }B_3$.  If we
label an arbitrary point of $L$ with $a$ and an arbitrary point of $B_1$ with 3, then we
may label the points of $B_2$ and $B_3$  so that the segment $3,5,7,6,8,a$ is admissible,
where there are at least two choices for the point to be labelled 9.
\end{lemma}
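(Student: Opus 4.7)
The plan is to identify which blocks of $\mathcal{T}$ can obstruct admissibility of $3,5,7,6,8,a$, parameterize labellings by the choice of the vertex $\alpha\in B_2$ to be labelled $4$ and the vertex $\beta\in B_3$ to be labelled $9$, and then conclude by a case analysis on a small bipartite structure.  I would first observe that the middle 3-subsegments $5,7,6$ and $7,6,8$ cannot be blocks, since they contain the edges $\{5,6\}\subset B_2$ and $\{7,8\}\subset B_3$ respectively.  Hence admissibility of the 6-segment reduces to three conditions: $[3,5,7]$ is not a block; $[6,8,a]$ is not a block; and the 6-set $S_{\alpha,\beta}=\{3,a\}\cup(B_2\setminus\{\alpha\})\cup(B_3\setminus\{\beta\})$ does not partition into two vertex-disjoint blocks.

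I would then introduce the complete bipartite graph $K_{B_2,B_3}$ and let $M_3$ (respectively $M_a$) be the set of edges $uv$, $u\in B_2$, $v\in B_3$, such that $[3,u,v]$ (resp.\ $[a,u,v]$) is a block of $\mathcal{T}$.  Edge-disjointness of blocks through a common vertex makes both $M_3$ and $M_a$ matchings in $K_{B_2,B_3}$; moreover $M_3\cap M_a=\emptyset$, since $[3,u,v]$ and $[a,u,v]$ would share the edge $uv$.  In particular $|M_3|,|M_a|\le 3$.  For each $(\alpha,\beta)$, let $p_{\alpha,\beta}$ and $q_{\alpha,\beta}$ count the edges of $M_3$ and $M_a$ that avoid $\{\alpha,\beta\}$.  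A short verification shows that, when $S_{\alpha,\beta}$ does not itself partition into two blocks, the labellings violating the $[3,5,7]$- or $[6,8,a]$-condition form disjoint subsets of the four available labellings, of sizes $p_{\alpha,\beta}$ and $q_{\alpha,\beta}$; hence a good labelling for $(\alpha,\beta)$ exists iff $p_{\alpha,\beta}+q_{\alpha,\beta}\le 3$.  Further, by Lemma \ref{six}(1) the 6-set $S_{\alpha,\beta}$ has at most one partition into two blocks, and such a partition corresponds precisely to a vertex-disjoint pair $(e,f)\in M_3\times M_a$ with $(B_2\cup B_3)\setminus(e\cup f)=\{\alpha,\beta\}$.

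The conclusion then follows from a case analysis on $(|M_3|,|M_a|)$ subject to $M_3\cap M_a=\emptyset$.  In the easier subcases, where $|M_3|+|M_a|$ is small, few $(\alpha,\beta)$ are obstructed and one verifies directly that at least two $\beta$'s admit a usable $\alpha$.  The main obstacle is the extremal case $|M_3|=|M_a|=3$, in which $M_3$ and $M_a$ are two disjoint perfect matchings of $K_{B_2,B_3}$ whose union is a Hamiltonian 6-cycle on $B_2\cup B_3$.  Here a direct enumeration shows that exactly three $(\alpha,\beta)$ pairs are obstructed by a partition of $S_{\alpha,\beta}$, no pair satisfies $p+q=4$, and the three obstructed pairs are distributed one per $\beta\in B_3$, leaving two usable $\alpha$'s for each $\beta$.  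The intermediate cases are strictly less constrained, so in every case at least two (indeed all three) choices of $\beta$ admit a valid labelling of $B_2$ and $B_3$ making $3,5,7,6,8,a$ admissible.
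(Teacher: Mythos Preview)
Your plan is sound and leads to a correct proof, but it takes a genuinely different route from the paper.  The paper does not enumerate $(\alpha,\beta)$ pairs or introduce the matchings $M_3,M_a$ at all.  Instead it argues in two quick moves: first, with $a$ and $3$ fixed (and a labelling of $B_2$ in hand), it applies Lemma~\ref{six}(3) directly to see that among the three $6$-sets $\{3,5,6,a\}\cup(B_3\setminus\{9\})$ at most one can partition into two blocks, so at least two choices of $9$ survive; second, with such a $9$ fixed, it removes any bad $3$-segment $[3,5,7]$ or $[6,8,a]$ by swapping labels within $\{5,6\}$ and within $\{7,8\}$, using again that the $6$-set does not partition to show the swap does not land on another block.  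Your approach instead keeps the choice of $4$ and $9$ simultaneously free, packages every obstruction as an edge of $M_3$ or $M_a$, and reduces the statement to a small bipartite counting problem.

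What each buys: the paper's argument is shorter and avoids any case split on $(|M_3|,|M_a|)$, leaning entirely on Lemma~\ref{six} and a couple of label swaps.  Your argument is more structural: it makes explicit exactly which pairs $(\alpha,\beta)$ fail and why, and it separates cleanly the ``$6$-set partitions'' obstruction from the ``$p+q=4$'' obstruction.  One remark on your write-up: the closing assertion that ``the intermediate cases are strictly less constrained'' is true but deserves a line of justification rather than an appeal to monotonicity.  A clean way is to note that for any fixed $\beta$ at most two $\alpha$'s can have $S_{\alpha,\beta}$ partitioning (since such a partition is determined by which of the two points of $B_3\setminus\{\beta\}$ is the $B_3$-end of the $M_3$-edge), and that any $\alpha$ with $p_{\alpha,\beta}+q_{\alpha,\beta}=4$ must lie outside the $B_2$-support of $M_3\cup M_a$, hence must coincide with one of those two; so no $\beta$ can have all three $\alpha$'s blocked.
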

\begin{proof} First label arbitrary points of $L$ and $B_1$ with $a$ and 3, respectively.
By Lemma \ref{six}, at most one of the sets $\{a,3,4,5,6,x,y\}$, over the three choices
of $\{x,y\}\subset B_3$, has a partition into two blocks.  That gives at least two choices
for the point labelled 9 so that $\{a,3,4,5,6,7,8\}$ has no partition into two blocks.  

Consider the sequence $\pi'=3,5,7,6,8,a$ for a choice that has no partition of the 6-set. The
only possible inadmissible 3-segments contained in $\pi'$ are $3,5,7$ and $6,8,a$.  If both
are admissible, then $\pi'$ is admissible and we are done.  If $[3,5,7]$ is a block and $[5,8,a]$
is not a block, then interchanging the labels of 5 and 6 results in $\pi'$ being properly admissible.
If both $[3,5,7]$ and $[5,8,a]$ are blocks, then interchanging the labels of 5 and 6,
and 7 and 8 results in $\pi'$ being admissible (noting that $[3,6,7]$ is not a block prior to
relabelling because $[5,8,a]$ is a block and $3,5,7,6,8,a$ has no partition into two blocks).
A similar argument works when $[6,8,a]$ is a block.  This completes the proof.    \end{proof}

\bigskip

Recall that a set $A$ of cardinality 3 is a good set if $V\setminus A$ has no partition into
three blocks.  We are interested in the complements of the four 9-segments, that is,
$\{1,2,4\},\{1,2,c\},\{b,c,1\}\mbox{ and }\{b,c,9\}$ as we require all four of them to be
good.  If there is an $\alpha\in B_i$ such that $\{b,c,\alpha\}$ is a good set, we say that
$\alpha$ is an $a$-{\it replacement}.  It is then clear what $b$- and $c$-replacements are.

Our next goal is to show there is a choice of $a$ for which $B_1,B_2\mbox{ and }B_3$ all
have at least two $a$-replacements.  The next result achieves the goal.

\begin{lemma}\label{rplc} There is a point in $L$ which may be labelled $a$ such that each
$B_i$, $1\leq i\leq 3$, has at least two $a$-replacements. 
\end{lemma}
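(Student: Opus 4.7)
The plan is to argue by contradiction. Suppose that for every $a \in L$ some $B_{i(a)}$ contains at least two points $\alpha$ for which $(L \setminus \{a\}) \cup \{\alpha\}$ is bad; fix such an $i(a)$ and two bad points per $a$, giving six bad 3-sets in all. I will derive either an edge-disjointness contradiction or four pairwise-disjoint blocks of $\mathcal{T}$.

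First I would pin down the structure of the partition realizing a bad 3-set $T = \{b, c, \alpha\}$ with $\alpha \in B_i$ and $a = L \setminus \{b, c\}$. Because blocks of $\mathcal{T}$ are edge-disjoint, no block can contain two points of any $B_k$; in particular no block of the partition equals $B_j$ for $j \ne i$, since otherwise the remaining 6-set $(B_i \setminus \{\alpha\}) \cup B_{j'} \cup \{a\}$ would have to split into two blocks each carrying at most one vertex of $B_{j'}$, yet all three vertices of $B_{j'}$ lie there. Hence each partition block has at most one vertex in each $B_k$, and a point-count in $B_i, B_j, B_{j'}, L$ forces exactly one block to have the form $C_3 = \{a, b_j, b_{j'}\}$ and the other two to be transversals of $B_1, B_2, B_3$ (one vertex in each). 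Two bad triples $(a, \alpha_1), (a, \alpha_2)$ with $\alpha_1 \ne \alpha_2 \in B_i$ then must have distinct $C_3$-blocks: if they shared one, the uniquely determined 6-point partitions (by Lemma \ref{six}) would each contain a transversal with $B_i$-vertex $\beta_\ast := B_i \setminus \{\alpha_1, \alpha_2\}$, and edge-disjointness at $\beta_\ast$ would force the two paired transversals to share an edge in $K_{B_j, B_{j'}}$.

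The remaining argument splits on the pattern of $i(a)$. If $i(a) = i$ is constant, each of the six partitions uses edges of a single bipartite graph $K_{B_j, B_{j'}}$, with its three blocks contributing one edge each to form a perfect matching of $K_{B_j, B_{j'}}$. The six bad triples yield six distinct matchings, thus all six perfect matchings of $K_{3,3}$; since every edge of $K_{3,3}$ lies in exactly two matchings, every block using such an edge appears in two partitions, and in particular each $C_3$-block is $C_3$ for two bad triples with the same $a$, contradicting the distinctness just shown. If $i(a)$ is not constant, I would pick one $C_3$-block per $a \in L$ from the $2^3$ possibilities so that the three chosen blocks are pairwise disjoint; their union then omits a transversal $\{\beta_\ast, \delta_\ast, \epsilon_\ast\}$, which together with an appropriate transversal block furnished by one of the realizing partitions (possibly after swapping $C_3$-choices) produces four pairwise-disjoint blocks of $\mathcal{T}$, contradicting the hypothesis.

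The main obstacle is the non-constant subcase: tracking the combinatorial interactions of the multiple $C_3$-blocks and transversal blocks across the six realizing partitions requires careful bookkeeping to ensure that at least one selection of $C_3$-blocks completes to a genuine four-disjoint configuration.
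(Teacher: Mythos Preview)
Your constant-$i(a)$ case is correct and cleaner than the paper's argument: the observation that the six bad triples yield six pairwise distinct perfect matchings of $K_{3,3}$, hence all of them, and then that each $C_3$-block must recur in two partitions with the same $a$, is a nice counting contradiction.

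The non-constant case, however, has a genuine gap that goes beyond bookkeeping. Consider the sub-subcase $i(a_1)=i(a_2)=1$, $i(a_3)=2$. Then the two $C_3$-blocks for $a_1,a_2$ each meet $B_2$ and $B_3$, while the $C_3$-block for $a_3$ meets $B_1$ and $B_3$. Any choice of three pairwise-disjoint $C_3$-blocks therefore uses up all three points of $B_3$, so no transversal block (which must contain a $B_3$-point) can be disjoint from them; the complement of the three $C_3$-blocks consists of two points of $B_1$ and one of $B_2$, which cannot be a block because it shares an edge with $B_1$. Thus the ``three $C_3$'s plus one transversal'' strategy is impossible here, not merely intricate. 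Even in the all-different sub-subcase, the complement of three disjoint $C_3$-blocks is indeed a transversal, but nothing forces it to be a block of $\mathcal T$, and you give no mechanism linking it to the transversal blocks occurring in the six realizing partitions.

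The paper avoids this by never trying to manufacture four disjoint blocks. Instead it fixes (WLOG) the two non-$b$-replacements in $B_1$, writes down the six forced blocks explicitly, and then shows directly that if a point of $B_1$ is also a non-$x$-replacement for $x\in\{a,c\}$ then the \emph{other two} points of $B_1$ are $x$-replacements. This confines any further failure to $B_2$ and $B_3$ for $a$ and $c$ separately, and a short edge-by-edge analysis in $K_{B_2,B_3}$ rules that out. The argument is purely about edge-disjointness constraints and never invokes the ``no four disjoint blocks'' hypothesis; you could salvage your approach by abandoning the four-block target in the non-constant case and instead tracking edge conflicts among the transversal blocks of the six partitions, which is closer in spirit to what the paper does.
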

\begin{proof} If the result is not true, then for each $x\in L$ there is a $B_i$ such
that two of the numbers in $B_i$ are not $x$-replacements.  Without loss of generality we
label the points of $B_1$ with 1, 2, 3  so that neither 2 nor 3 are $b$-replacements. This means
that both $$\{1,2,4,5,6,7,8,9,b\}\mbox{ and }\{1,3,4,5,6,7,8,9,b\}$$ have partitions into
three blocks.  It is easy to see that neither $B_2$ nor $B_3$ can be parts in either partition.
So the two partitions generate perfect matchings in $K_{B_2,B_3}$ and it is easy to check
that the two perfect matchings are edge-disjoint.

We may assume the partition of $\{1,2,4,5,6,7,8,9,b\}$ is $[1,5,8],[2,6,9]$ and $[4,7,b]$
without loss of generality.  A partition of $\{1,3,4,5,6,7,8,9,b\}$ is determined once the block
containing $b$ is known.  If the block is $[5,9,b]$, then the other two blocks are forced to be
$[1,6,7]\mbox{ and }[3,4,8]$, whereas, if the block is $[6,8,b]$, then the remaining two blocks
are $[1,4,9]\mbox{ and }[2,6,7]$.  However, if we relabel the vertices for the latter situation
using the permutation $(2\;\;3)(6\;\;4\;\;5)(8\;\;7\;\;9)$, we obtain the same set of blocks as
the first situation.  Thus, we may assume that 
\[[4,7,b],[5,9,b],[1,5,8],[1,6,7],[2,6,9]\mbox{ and }[3,4,8]\] are blocks realizing the edges
$47,48,58,59,67\mbox{ and }69$ in the union of the two perfect matchings.

We now wish to show that if 3 also is not an $x$-replacement, where $x\in\{a,c\}$, then 1 and
2 are $x$-replacements.  So let 3 also not be an $x$-replacement.  This means that
$\{1,2,4,5,6,7,8,9,x\}$ has a partition into blocks.  Look at the block containing 1.  It cannot be
$[1,5,8]$ as this would force $[4,7,x]$ to be a block which is impossible.  It cannot be $[1,6,7]$
because this prevents 5 from belonging to a block.  Hence, the block must be $[1,4,9]$ which
admits $[2,5,7]\mbox{ and }[6,8,x]$ as the only completion to a partition.

The three partitions use all of the edges of $K_{B_2,B_3}$ in blocks.  We now need to show
that both 1 and 2 are $x$-replacements. The set $\{2,3,4,5,6,7,8,9,x\}$ has no partition
because the only two blocks involving 2 are $[2,5,7]$ and $[2,6,9]$.  The first one forces
$[3,4,9]$ to be a block and the second leaves no block to contain $x$.  Hence, 1 is an
$x$-replacement.  Then $\{1,3,4,5,6,7,8,9,x\}$ also has no partition because the only block
containing 3 is $[3,4,8]$ and the only block containing $x$ is $[6,8,x]$.  Therefore, both 1
and 2 are $x$-replacements.  

The preceding argument slightly modified also works with 2 and 3 interchanging roles.
We now are able to conclude that $B_1$ contains at least two $x$-replacements for all
$x\in\{a,c\}$.  If both 2 and 3 are $x$-replacements, the conclusion follows, whereas,
if one of 2 or 3 is not an $x$-replacement, then the remaining two numbers in $B_1$ are
$x$-replacements validating the conclusion. Therefore, the only way the conclusion of the
lemma fails is if there are two numbers in $B_2$ that fail to be $x$-replacements
for $x\in\{a,c\}$ and two numbers in $B_3$ that fail to be $y$-replacements, where
$\{x,y\}=\{a,c\}$.  We now show this cannot happen.

There are cases to consider and we work through one case to illustrate how the proof
works.   All we know is the existence of the six blocks earlier in the proof. We examine
$B_2$ starting with 5 with $x\in\{a,c\}$.  Look at the edges incident with 6 for possible
partitions of $\{1,2,3,4,6,7,8,9,x\}$. The edge 67 belongs to the block $[1,6,7]$ and
its presence forces the block $[3,4,8]$ to be there as well.  Then $[2,9,x]$ would have
to be a block but the block containing 29 is $[2,6,9]$. 

In a similar fashion one can show that there is no partition involving the edges 68 and
69.  This implies the set $\{1,2,3,4,6,7,8,9,x\}$ has no partition into blocks which means
that 5 is an $x$-replacement.

Examining the set $\{1,2,3,5,6,7,8,9,x\}$ leads to the only possible partition being
$[1,5,8],[2,6,9]\mbox{ and }[3,7,x]$.  There are several possible partitions of
$\{1,2,3,4,5,7,8,9,x\}$.  We must take each of them and pair it with the preceding
partition of $\{1,2,3,5,6,7,8,9,x\}$ and show that the two of them imply that at least
two numbers from $B_3$ are $y$-replacements as this would give two or more numbers
from each $B_i$, $i=1,2,3$, as $y$-replacements.

One possible partition of $\{1,2,3,4,5,7,8,9,x\}$ is $[1,4,9],[2,8,x]$ and $[3,5,7]$.
Combining this partition with that for $\{1,2,3,5,6,7,8,9,x\}$ we find that both 7 and
8 are $y$-replacements as required.  The other possible partition for
$\{1,2,3,4,5,7,8,9,x\}$ is $[1,9,x],[2,5,7],[3,4,8].$  It turns out that 7 and 9 are
$y$-replacements and that completes the proof.    \end{proof}

\bigskip

Let's take a moment to see the current status of our progress.  As there are two choices
for the element of $B_3$ to be labelled 9 for Lemmas \ref{9flex} and \ref{rplc}, there is at least
one element of $B_3$ that can be labelled 9 satisfying both lemmas.  This makes many, but not
all, of the problematic segments admissible.

\begin{lemma}\label{crucial} Suppose there is a labelling for which $\{1,2,c\}$ and $\{1,2,4\}$
are good sets, and both Lemmas \ref{9flex} and \ref{rplc} hold.  If none of the following
3-sets are blocks, then $\pi$ is admissible: $\{6,a,b\}, \{9,a,b\}, \{9,b,c\}$ and $\{6,8,b\}$
when $\{9,a,c\}$ is a block.
\end{lemma}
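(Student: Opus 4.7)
My plan is to verify the admissibility of $\pi = 1,2,4,3,5,7,6,8,a,9,b,c$ segment class by segment class, leveraging the freedom built into Lemmas \ref{9flex} and \ref{rplc}. First I would use Lemma \ref{rplc} to arrange, in addition to the stated hypotheses, that 1 is an $a$-replacement: since $B_1$ has at least two $a$-replacements among $\{1,2,3\}$, a swap of the labels 1 and 2 (which leaves the sets $\{1,2,c\}$ and $\{1,2,4\}$ unchanged and does not affect the statements of Lemmas \ref{9flex} and \ref{rplc}) achieves this. The remark preceding this lemma already ensures, by a pigeonhole over the three elements of $B_3$, that 9 may be chosen to simultaneously satisfy Lemma \ref{9flex} and be an $a$-replacement. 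Thus all four 9-segments have good complements---$\{1,2,4\}$ and $\{1,2,c\}$ by hypothesis, and $\{1,b,c\}$ and $\{9,b,c\}$ because 1 and 9 are $a$-replacements---so every 9-segment is admissible.

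Next I would dispose of the ten 3-segments: six of them contain an edge of $B_1$, $B_2$, or $B_3$ and hence cannot be blocks by edge-disjointness; $\{3,5,7\}$ and $\{6,8,a\}$ are blocked by Lemma \ref{9flex}; and $\{9,a,b\},\{9,b,c\}$ are excluded by direct hypothesis. Turning to the seven 6-segments, the first five each contain one of $B_1, B_2, B_3$ as a subset, so by Lemma \ref{six} the unique candidate partition is that block together with its complement within the 6-segment, and this complement either contains an edge of another known block (violating edge-disjointness) or is excluded by Lemma \ref{9flex}. The 6-segment $\{6,7,8,9,a,b\}$ contains $B_3$ and, because $\{8,9\}$ is a $B_3$-edge, any two-block partition must include $B_3$ itself; this forces the pair $B_3 \cup \{6,a,b\}$, which fails since $\{6,a,b\}$ is not a block by hypothesis.

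The main obstacle is the final 6-segment $A = \{6,8,9,a,b,c\}$, which does not contain a full block but does contain the $B_3$-edge $\{8,9\}$; hence in any partition 8 and 9 lie in different parts. This gives six candidate two-block partitions. Three are disposed of by the stated non-block hypotheses: $\{6,8,a\}\cup\{9,b,c\}$ by Lemma \ref{9flex} (and by the $\{9,b,c\}$ hypothesis); $\{6,8,c\}\cup\{9,a,b\}$ by the $\{9,a,b\}$ hypothesis; and $\{6,8,b\}\cup\{9,a,c\}$ by the conditional hypothesis. The remaining three---$\{8,a,b\}\cup\{6,9,c\}$, $\{8,a,c\}\cup\{6,9,b\}$, and $\{8,b,c\}\cup\{6,9,a\}$---are the hard part. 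For each, I would assume both parts are blocks and, combining this with the goodness of the four 9-sets $\{1,2,4\}^c,\{1,2,c\}^c,\{1,b,c\}^c,\{9,b,c\}^c$ and with Lemma \ref{ipm}, derive either a partition of one of those good 9-sets into three disjoint blocks or the existence of a fourth mutually disjoint block, contradicting the standing assumption of the section that there are exactly three disjoint blocks. The careful bookkeeping needed to exclude each of these three pairings uniformly is the main technical obstacle; the other verifications are routine.
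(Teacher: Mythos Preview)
Your handling of the 9-segments, the 3-segments, and the first six 6-segments agrees with the paper's proof, only with more detail; the paper explicitly discusses just the three 6-segments $3,5,7,6,8,a$; $7,6,8,a,9,b$; and $6,8,a,9,b,c$, and for the second of these it writes $[6,8,b]$ where $[6,a,b]$ is clearly meant---your version is the correct one. The real divergence is at the last 6-segment $\{6,8,9,a,b,c\}$. The paper simply asserts that ``the only possible partition \ldots\ is into the blocks $[6,8,b]$ and $[9,a,c]$,'' whereas you correctly list six candidate two-block partitions and isolate the three not covered by the stated hypotheses: $[8,a,b]\cup[6,9,c]$, $[8,a,c]\cup[6,9,b]$, and $[8,b,c]\cup[6,9,a]$. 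So you have in fact located a gap that the paper's own proof glosses over.

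Unfortunately, your proposed strategy for these three---deriving either a partition of one of the good 9-sets or a fourth disjoint block---cannot succeed, because the hypotheses of the lemma as stated do not exclude them. Take the $\mathrm{PST}(12)$ whose only blocks are $[1,2,3]$, $[4,5,6]$, $[7,8,9]$, $[8,a,b]$, and $[6,9,c]$. One checks directly that it has at most three pairwise disjoint blocks; none of $\{6,a,b\},\{9,a,b\},\{9,b,c\},\{9,a,c\}$ is a block; $\{1,2,4\}$ and $\{1,2,c\}$ are good; every point of every $B_i$ is an $a$-replacement (so Lemma~\ref{rplc} holds); and every labelling of $B_2,B_3$ makes $3,5,7,6,8,a$ admissible (so Lemma~\ref{9flex} holds with all three choices for $9$). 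Yet the segment $6,8,a,9,b,c$ partitions as $[8,a,b]\cup[6,9,c]$, so $\pi$ is inadmissible. Thus no combination of the goodness of the four 9-sets, Lemma~\ref{ipm}, and the ``no four disjoint blocks'' assumption can furnish the contradiction you are after. The lemma as stated appears to need an extra hypothesis; in each application inside Theorem~\ref{main} the case-by-case control of tiples quietly supplies it (for instance, ``no tiple with tip in $B_3$'' kills all three hard partitions at once), but your sketch, like the paper's proof, does not.
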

\begin{proof} Because both $\{1,2,4\}$ and $\{1,2,c\}$ are good sets, the 9-segments
ending with $b$ and $c$ do not have partitions into three blocks.  Because of Lemma
\ref{rplc}, either 1 or 2 is an $a$-replacement so we may choose to label them so that 1
is an $a$-replacement and the 9-segment beginning with 2 has no partition into three
disjoint blocks. 

Similarly, we have two choices for the point labelled 9 for Lemmas \ref{9flex} and \ref{rplc}.
This gives us that the remaining 9-segment has no partition into three disjoint blocks
and that the segment $3,5,7,6,8,a$ is admissible.  The latter implies that $\{3,5,7\}$ and
$\{6,8,a\}$ are not blocks.

Thus, the segments that may be inadmissible are $a,9,b$; $9,b,c$; $7,6,8,a,9,b$; and
$6,8,a,9,b,c$.  The first two are  admissible by hypothesis.  The segment $7,6,8,a,9,b$
is inadmissible only if $[6,8,b]$ is a block and it is not.  The only possible partition of the
6-segment $6,8,a,9,b,c$ is into the blocks $[6,8,b]$ and $[9,a,c]$ which is not the case
by hypothesis.    \end{proof}

\bigskip  

We now have set the stage for the proof of the main theorem.  The primary idea is to
obtain a labelling in accordance with Lemma \ref{crucial} and do so in a way that avoids the
blocks listed in the lemma.  There are cases depending on the distribution of blocks composed
of two points from $L$ and one point from a $B_i$.  Such a block is called a {\it tiple} and
the point from the $B_i$ is called the {\it tip}.  

\begin{thm}\label{main} A partial Steiner triple system $\mathcal{T} $with at most three
disjoint blocks is sequenceable.
\end{thm}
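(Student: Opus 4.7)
By Lemma \ref{zero} and Lemma \ref{two} we may assume $\mathcal{T}$ contains three mutually disjoint blocks, and by hypothesis it contains no four mutually disjoint blocks. Lemmas \ref{nine}, \ref{ten} and \ref{eleven} dispose of $|V|\in\{9,10,11\}$, and Lemma \ref{stage} reduces all orders $|V|\geq 13$ to the order 12 case. So the entire theorem collapses to the following single task: given a $\mathrm{PST}(12)$ whose vertex set partitions into three disjoint blocks and a 3-element set $L$, produce a labelling for which the sequence $\pi=1,2,4,3,5,7,6,8,a,9,b,c$ is admissible.

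My plan is to feed the three preparatory lemmas of the section into Lemma \ref{crucial}. First I fix $B_1,B_2,B_3$ arbitrarily and choose the point of $L$ named $a$ using Lemma \ref{rplc}, so that each $B_i$ has at least two $a$-replacements; simultaneously Lemma \ref{9flex} leaves two candidates in $B_3$ for the label 9, and the two lemmas share at least one common valid choice. After this, rename the two points of $L\setminus\{a\}$ as $b,c$ and use the two $a$-replacements in $B_1$ to label one of them $1$ (forcing $\{1,2,4\}$ to be a good set) and use a $c$-replacement analysis to ensure $\{1,2,c\}$ is also good. Once these goodness conditions hold, Lemma \ref{crucial} reduces the entire problem to guaranteeing that none of the 3-sets $\{6,a,b\},\{9,a,b\},\{9,b,c\}$ is a block, together with the conditional requirement that $\{6,8,b\}$ is not a block when $\{9,a,c\}$ is.

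The main work will be a case analysis on the set of \emph{tiples}, i.e.\ the blocks of the form $\{x,y,z\}$ with $x,y\in L$ and tip $z\in B_1\cup B_2\cup B_3$. Since $|L|=3$, there are at most three tiples, one for each of the pairs $\{a,b\},\{a,c\},\{b,c\}$, and the forbidden triples in Lemma \ref{crucial} are precisely tiples with tips at the positions labelled $6$, $9$ (for $\{a,b\}$ and $\{b,c\}$) and the exceptional $\{6,8,b\}$. The remaining freedom I will exploit is: (i) swapping the labels $b\leftrightarrow c$; (ii) swapping $B_2\leftrightarrow B_3$; (iii) relabelling within each $B_i$ in a way consistent with Lemmas \ref{9flex} and \ref{rplc}; and (iv) the two-element choice inside $B_3$ for the label $9$. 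In each case — no tiple, exactly one tiple (sub-divided by which $B_i$ contains the tip), two tiples, and three tiples — I verify that one of these moves places all offending tiples outside the positions $\{6,a,b\}$, $\{9,a,b\}$, $\{9,b,c\}$.

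The hard part will be the case with three tiples, one per pair, combined with the conditional clause of Lemma \ref{crucial}. Here the pair $\{a,b\}$ already dictates where $6$ or $9$ cannot sit, the pair $\{b,c\}$ does the same, and when the $\{a,c\}$-tiple happens to be $\{9,a,c\}$ the extra triple $\{6,8,b\}$ also must be avoided. In this configuration the three tips occupy at most three of the positions $\{B_1,B_2,B_3\}$, so by swapping $B_2\leftrightarrow B_3$ and choosing which of the two candidates for label $9$ is taken (given by the intersection of Lemmas \ref{9flex} and \ref{rplc}), one can always locate the tiple-tips into slots that never coincide with $6$, $9$, or $8$ in the specific offending patterns; verifying this without disturbing the good-set conditions is where Lemma \ref{ipm}(1)–(2) will be needed to rule out obstructions coming from the 9-segments. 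Once this final configuration is handled, the labelling produced satisfies all hypotheses of Lemma \ref{crucial}, so $\pi$ is admissible, completing the proof.
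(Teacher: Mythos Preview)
Your reduction to order $12$ and the overall plan of invoking Lemma \ref{crucial} after arranging the hypotheses via Lemmas \ref{9flex} and \ref{rplc} matches the paper's strategy. However, there is a concrete error in how you intend to secure those hypotheses, and a structural gap in the case analysis.

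The error is your claim that labelling an $a$-replacement in $B_1$ with $1$ ``forces $\{1,2,4\}$ to be a good set''. By definition an $a$-replacement $\alpha$ only guarantees that $\{b,c,\alpha\}$ is good; in the sequence $\pi$ this handles the $9$-segment with complement $\{1,b,c\}$, not the one with complement $\{1,2,4\}$. The goodness of $\{1,2,4\}$ (and of $\{1,2,c\}$) is an entirely separate condition that the $a$-replacement machinery does not touch. In the paper these two good-set conditions are obtained by ad hoc arguments in each tiple case --- for instance Claims 1 and 2 in Case 1 use perfect-matching counting in $K_{B_2,B_3}$ together with Lemma \ref{ipm}(3) to show that at most one set of each form $\{\alpha,\beta,c\}$ and $\{\alpha,\beta,4\}$ can be bad, and these arguments depend on knowing explicit tiples such as $[4,a,b]$. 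Your proposal skips this work.

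The structural gap is that you commit to an arbitrary choice of $B_1,B_2,B_3$ before examining the tiples and then only allow the swap $B_2\leftrightarrow B_3$. The paper's case division is by \emph{where the tips lie}, not by how many tiples there are, and the assignment of block labels is made after that: e.g.\ when all tips lie in one block that block is forced to be $B_2$, and when two tips lie in one block and a third elsewhere, $B_1$ is assigned to the block containing the lone tip. Without this flexibility the good-set and forbidden-triple arguments do not line up. Moreover, in the subcase of Case 2 where $[4,a,b]$ and $[6,b,c]$ are the two tiples with tips in $B_2$, the paper cannot always make $\{1,2,4\}$ good and must fall back on an alternative sequence $\pi'=1,2,5,3,4,7,6,8,a,9,b,c$ and a further subcase analysis; your plan does not anticipate that $\pi$ itself may be unusable even after all the allowed relabellings.
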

\begin{proof} Lemmas \ref{zero} and \ref{two} take care of the cases that $\mathcal{T}$
has at most two disjoint blocks.  If there are three disjoint blocks, but not four, then
Lemmas \ref{nine}, \ref{ten} and \ref{eleven} take care of the cases that $\mathcal{T}$
has order 9, 10 or 11.  Finally, Lemma \ref{stage} tells us that if we can find an admissible
sequence of the form $\pi=1,2,4,3,5,7,6,8,a,9,b,c$ when $\mathcal{T}$ has order 12,
then the result holds for all larger orders.  The rest of the proof consists of arguments
verifying there is a labelling so that $\pi$ is admissible.

{\bf Case 1}. There are three tiples all of whose tips are in the same block $B_i$.
The first labelling step is giving label $B_2$ to the block containing the tips of
the three tiples, and arbitrarily labelling the other two blocks $B_1$ and $B_3$.  Label
the point of $L$ satisfying Lemma \ref{rplc} with $a$, and arbitrarily label the other two
points $b$ and $c$.  By the pigeonhole principle, there is a point in $B_3$ we label 9 
so that both Lemma \ref{9flex} applies and it is an $a$-replacement.  Label the point
in $B_2$ with 4 so that $[4,a,b]$ is a block.

{\bf Claim 1}. At most one of the sets $\{1,2,c\},\{1,3,c\}\mbox{ and }\{2,3,c\}$ is bad
for any labelling of the points of $B_1$.  Suppose that $\{\alpha,4,5,6,7,8,9,a,b\}$, $\alpha\in\{1,2,3\}$,
has a partition into three blocks.  Neither $B_2$ nor $B_3$ are parts in the partition as this
would imply that $[\alpha,a,b]$ is a block.  The partition generates a perfect matching in
$K_{B_2,B_3}$ with one edge labelled $\alpha$, one labelled $a$ and the other labelled $b$.

If there is a partition of $\{\beta,4,5,6,7,8,9,a,b\}$, $\beta\neq\alpha$, then the
perfect matching generated is edge-disjoint from the first perfect matching by part (3)
of Lemma \ref{ipm}.  It has an edge labelled $\beta$ incident with 4 leaving only one choice
for the edge labelled $a$.  This leaves no choices for an edge to be labelled $b$ and
the claim follows.

{\bf Claim 2}.  At most one of the sets $\{1,2,4\},\{1,3,4\}\mbox{ and }\{2,3,4\}$ is
bad under any labelling of the points of $B_1$.  Supposing that $\{\alpha,5,6,7,8,9,a,b,c\}$,
$\alpha\in\{1,2,3\}$, has a partition into blocks, it is easy to verify that none of the parts
may be a tiple or $B_3$.  We may label the remaining two points of $B_2$ with 5 and 6 so
that $[5,a,c]$ and $[6,b,c]$ are the other two tiples.  Then $B_3$ and $B'= [5,a,c]$ are
disjoint blocks so that a partition generates a perfect matching in $K_{B_3,B'}$.  This forces
one of the blocks to contain $\{6,a\}$.  But this is the same for any $\alpha\in\{1,2,3\}$
and part (3) of Lemma \ref{ipm} then implies none of the other two possibilities have a
partition into blocks.

From the two claims we conclude that there is a labelling of $B_1$ so that both $\{1,2,c\}$
and $\{1,2,4\}$ are good sets.  There are no tiples with tips in $B_3$ so that the only
possible problem 3-set in Lemma \ref{crucial} is $\{6,a,b\}$.  However, the relabelling
done in Lemma \ref{9flex} does not affect 4 so that $[4,a,b]$ still is a block which means
$\{6,a,b\}$ is not a block.  Thus, $\pi$ is admissible for this case.

\medskip

{\bf Case 2}. There are two tiples, but not three, whose tips are in the same $B_i$.
Label the block containing two tips $B_2$.  Label the point of $L$ satisfying Lemma \ref{rplc}
with $a$, and do not label the other two points of $L$ at this time.  There may or may not
be a third tiple.  It there is, label the block containing its tip $B_1$.

If the point labelled $a$ is in both blocks with tips in $B_2$, then label points so
that $[5,a,b]$ and $[6,a,c]$ are blocks which, of course, means that when the third
tiple exists it contains $b$ and $c$.  Claim 1 of the preceding proof still holds
so that two of the sets $\{1,2,c\},\{1,3,c\}\mbox{ and }\{2,3,c\}$ are good.
Claim 2 also remains valid except that we are using $[5,a,b]$ and $[6,a,c]$ in place
of $[5,a,c]$ and $[6,b,c]$, respectively. 

Note that $[6,8,a]$ cannot be a block because $[6,a,c]$ is a block.  Thus, when we
apply the relabelling of Lemma \ref{9flex} to make the segment $3,5,7,6,8,a$
admissible, the labels of 5 and 6 do not change so that $[5,a,b]$ remains a block
so that $\{6,a,b\}$ is not.  Lemma \ref{crucial} implies that $\pi$ is admissible.  

The preceding case was for $a$ belonging to two tiples whose tips are in the same
block which is again labelled $B_2$.  Now let $a$ belong to just one of these two
tiples.  This subcase is the most intricate portion of the proof.  Label the other points of
$L$ so that $b$ is the point belonging to both tiples with tips in $B_2$.  Label the points
of $B_2$ so that $[4,a,b]$ and $[6,b,c]$ are blocks.  If there is a third tiple label the block
containing its tip $B_1$ and arbitrarily label the points of $B_1$.  Claim 1 still is valid
via essentially the same argument.

If at least two of the sets $\{1,2,4\},\{1,3,4\}\mbox{ and }\{2,3,4\}$,  are
good, then we complete the argument as  before.  If this is not the case we need a
variation.  If all three of the sets $\{1,2,5\},\{1,3,5\}\mbox{ and }\{2,3,5\}$ are
good, we may assume that $\{1,2,5\},\{1,2,c\},\{1,3,5\}\mbox{ and }\{1,3,c\}$ are
good as Claim 1 still holds. First consider $\{1,2,5\}$ and $\{1,2,c\}$ and choose 1 as the
$a$-replacement by relabelling if necessary.  It is straightforward to verify that the sequence
$\pi'=1,2,5,3,4,7,6,8,a,9,b,c$ is admissible when the segment $3,4,7,6,8,a$ is admissible.

When the segment $3,4,7,6,8,a$ is inadmissible, the relabelling we are allowed to carry
out is interchanging 7 and 8.  This then makes $\pi'$ admissible except in the following
situations: 1) Both $[3,4,7]$ and $[6,8,a]$ are blocks; 2) both $[3,4,8]$ and $[6,7,a]$ are
blocks; 3) both $[3,4,7]$ and $[6,7,a]$ are blocks; and 4) both $[3,4,8]$ and $[6,8,a]$
are blocks.

We then turn to the good sets $\{1,3,5\}$ and $\{1,3,c\}$.  Suppose that 1 and 2 did not
switch labels in the preceding step to obtain 1 as an $a$-replacement.  Then 1 still is an
$a$-replacement here.  Now interchange the labels of 2 and 3.  The segment
$3,4,7,6,8,a$ now is admissible because either $[6,7,a]$ or $[6,8,a]$ still is a block in
all four of the problematic block scenarios of the preceding paragraph, but $[2,4,7]$
and $[2,4,8]$ are not blocks.  Thus, $\pi'$ is now admissible.

If 1 and 2 did switch labels in the first step because 1 is not an $a$-replacement, then
$\{1,3,c\}$ and $\{1,3,5\}$ become $\{2,3,c\}$ and $\{2,3,5\}$, respectively.  Then
3 is the $a$-replacement and the rest of the argument carries through in the obvious way.

To complete this troublesome subcase, we start at the point that $[4,a,b]$ and $[6,b,c]$
are blocks and rebuild other assumptions.  It is easy to show that at least one of the
sets $\{1,2,4\},\{1,3,4\}\mbox{ and }\{2,3,4\}$ is good and from above, if two of them
are good, we are done.  Hence, we may assume that both $\{1,2,4\}$ and $\{1,3,4\}$
are bad.  One of them must involve $[7,8,9]$ as a block so that we may assume the
partition consists of the blocks $[7,8,9]$, $[6,b,c]$ and $[2,5,a]$.  The other partition
is forced to have $a,6$ in the same block and we assume that the blocks are $[6,7,a],
[5,8,b]\mbox{ and }[3,9,c]$.  Note that this labels $B_3$.

Now consider the three sets $\{1,2,5\},\{1,3,5\}\mbox{ and }\{2,3,5\}$.  If the 9-set
$\{\beta,4,6,7,8,9,a,b,c\}$, $\beta\in\{1,2,3\}$, has a partition into blocks, then $[7,8,9]$
cannot be one of the parts as both $[4,a,b]$ and $[6,b,c]$ are contained in the remaining
six points.  Thus, each part in a partition must be a block intersecting one point of $\{7,8,9\}$,
one point of $[4,a,b]$ and one point of $[6,b,c]$.  In particular, this forces $a$ to be in
a block with 6, that is, the block $[6,7,a]$.  The other blocks must be $[4,8,c]$ and $[\beta,9,b]$.
 
This implies there is a partition for at most one $\beta\in\{1,2,3\}$.  The case for no
partitions already has been done so we may assume exactly two of the sets $\{1,2,5\},
\{1,3,5\}\mbox{ and }\{2,3,5\}$ are good.

If $[\beta,4,7]$ is not a block, then $\beta,4,7,6,8,a$ is admissible. Furthermore, if
$\beta,4,7,6,9,a$ is inadmissible, then $[\beta,4,9]$ is a block and $\beta,4,8,6,9,a$ is 
admissible. Thus, after relabeling $1,2,4,3,5,7,6,8,a,9,b,c$ is an admissible sequence. Hence,
we will assume that $[\beta,4,7]$ does form a block, but we will use the label $\gamma$ 
instead of $\beta$ to keep using $\beta$ as an unlabelled element in our arguments.

Assume that $[\gamma,4,7]$ is a block for some $\gamma\in\{1,2,3\}$. Look at the sets
$\{1,2,6\},\{1,3,6\}$ and $\{2,3,6\}$.  Assume $\{\beta,4,5,7,8,9,a,b,c\}$ can be partitioned
into three blocks for some $\beta\in\{1,2,3\}$.  As $[4,a,b]$ and $[4,8,c]$ are blocks,
no block of the partition may contain $\{\beta,4\}$ as it would leave the three letters for the
remaining two blocks, but there are no tips in $\{5,7,8,9\}$. Because $[2,5,a]$ and $[5,b,8]$
are blocks, $[4,8,c]$ cannot be a block of the partition as this would force either a block
containing the pair $\{7,9\}$, or one of 7 and 9 to be a tip. Therefore, any partition of
$\{\beta,4,5,7,8,9,a,b,c\}$ must use $[4,a,b]$ which forces $[7,8,9]$ and $[\beta,5,c]$
to be the remaining parts.  We conclude that at least two of $\{1,2,6\},\{1,3,6\}$ and
$\{2,3,6\}$ are good sets.

Consider the segment $\alpha,4,8,5,\delta,a$ for any $\alpha\in\{1,2,3\}$ and $\delta\in\{7,9\}$.
The set $\{\alpha,4,8\}$ is not a block as $[4,8,c]$ is a block. Similarly, $\{5,\delta,a\}$ is not
a block because $[2,5,a]$ is a block. Finally, neither $\{\alpha,5,8\}$ nor $\{5,8,a\}$
form a block because $[5,8,b]$ is a block. Therefore, the segment is admissible. This yields two
choices for 9 so that we may relabel and obtain an admissible sequence of the correct form.

\medskip

{\bf Case 3}.  There is at least one tiple and no two with tips in the same $B_i$.
We are assuming there are one, two or three tiples. First label a point of
$L$ with $a$ to accommodate Lemmas \ref{9flex} and \ref{rplc}.  If $a$ belongs to no tiple,
then there is only one tiple and we arbitrarily label the remaining points in $L$ with $b$ and $c$,
and the points in $B_2$ so that $[5,b,c]$ is the tiple.  Arbitrarily label the remaining two blocks
$B_1$ and $B_3$.

We look at 9-sets of the form $\{\alpha,4,5,6,7,8,9,a,b\}$, where $\alpha\in B_1$, and the labels
for $B_3$ are not yet chosen.  As before, when a set of this form has a partition into three blocks,
it must generate a perfect matching in $K_{B_2,B_3}$.  The pair $5,b$ appears in the block $[5,b,c]$
so that no perfect matching has an edge labelled $b$ incident with the point 5.  This means there
are at most two perfect matchings so that there is at least one set of the form under discussion
which does not have a partition into three blocks.

Now consider the situation that the following two 9-sets $$\{\alpha,4,5,6,7,8,9,a,b\}
\mbox{ and }\{\beta,4,5,6,7,8,9,a,b\},$$ where $\alpha,\beta\in B_1$ and $\alpha\neq\beta,$
have partitions into three blocks. Without loss of generality, we may assume $B_3$ is labelled so
that the first perfect matching has the edge 47 labelled $\alpha$, the edge 58 labelled $a$ and the
edge 69 labelled $b$.  In the second perfect matching, the label $b$ must be on the edge 48 which
forces the edge 67 to be labelled $a$, thereby, leaving the edge 59 to be labelled $\beta$.

Given the edges as labelled in the preceding paragraph, we then have that $\{\alpha,\beta,c\}$
is a good set.  If we show that $\{\gamma,5,6,7,8,9,a,b,c\}$ has no partition into blocks, where
$B_1=\{\alpha,\beta,\gamma\}$, then $\{\alpha,\beta,4\}$ also is a good set.  The block $B_3$
cannot be a part in a partition as this would force $[5,b,c]$ to be a part which, in turn, would
imply that $[\gamma,6,a]$ is a block.  This is not possible because $[6,7,a]$ is a block.  

Thus, any partition of the 9-set must generate a perfect matching in $K_{B_3,B}$, where $B=
[5,b,c]$.  The points $a$ and 5 must be in the same block because neither $a,b$ nor $a,c$ are
in a block.  So the block is $[5,8,a]$.  Lemma \ref{six} forces $[6,9,b]$ to be a block leaving
$\gamma,4,7$.  However, $[\alpha,4,7]$ is a block and the 9-set under discussion has no
partition into three blocks.

We now label $B_1$ so that both $\{1,2,c\}$ and $\{1,2,4\}$ are good sets and 1 is an
$a$-replacement.  Now that 3 has been labelled we label 9 so that it is an $a$-replacement.
Lemma \ref{crucial} now implies that $\pi$ is admissible because none of the problematic
3-sets are blocks.

\medskip

If there is only one tiple and it contains $a$, then we label the other point of the tiple in $L$
with $c$ and choose $B_2$ as we just did with the tiple being $[5,a,c]$.  The proof now goes
through the same with $a$ playing the role of $b$ in the preceding proof.

\medskip

If there are two tiples and $a$ belongs to just one of them, then do the obvious labelling
so that $[5,b,c]$ is one tiple and $[\alpha,a,c]$ is the other with $\alpha\in B_1$.  The 
preceding argument still works because the tiple $[\alpha,a,c]$ restricts the perfect
matchings even more 

\medskip
 
We must be more careful when $a$ belongs to both tiples because there will be a tiple
containing both $a$ and $b$.  What we do in this case after labelling $a$ is to label the
given blocks so that one tip is in $B_2$ and the other is in $B_3$.  Label the remaining
points of $L$ and a point of $B_2$ so that $[4,a,b]$ is a block which implies that $a$
and $c$ are the points of the tiple whose tip lies in $B_3$. 

It takes a little work to show there is a labelling such that both $\{1,2,c\}$ and
$\{1,2,4\}$ are good sets.  If all three of the sets of the form $\{\alpha,\beta,c\}$,
$\alpha,\beta\in B_1$, are good, then use the fact that at least one set of the form
$\{\alpha,\beta,4\}$ is good to establish a labelling of $B_1$ that works.

If not all three are good, it is straightforward to show that at least two of the sets of
the form $\{\alpha,\beta,c\}$ are good.  Then assuming that the 9-set $\{\alpha,4,5,6,7,8,9,a,b\}$
has a partition into three disjoint blocks, we can show there are at least two good sets
of the form $\{\alpha,\beta,4\}$, $\alpha,\beta\in B_1$.  The key is to show there is
no partition of a 9-set of the form $\{\alpha,5,6,7,8,9,a,b,c\}$ with $[x,a,c]$ as a
part for $x\in B_3$.  Then work with the perfect matchings arising in
$K_{B_3,L}$ for possible partitions.

Hence, we label the points of $B_1$ so that $\{1,2,c\}$ and $\{1,2,4\}$ are good sets,
and choose 9 so that Lemma \ref{9flex} is valid.  The sequence $\pi$ is now admissible
from Lemma \ref{crucial} because the tiple $[4,a,b]$ does not change in any relabelling.            

\medskip

When there are three tiples, we choose $a$ in the usual way and arbitrarily label the other
two elements of $L$ with $b$ and $c$.  We label $B_2$ so that it contains the tip in the
tiple containing $a,b$.  We label 4 so that $[4,a,b]$ is the tiple.  We then label $B_1$
for the tip of the tiple containing $b,c$  and $B_3$ for the tiple containing $a,c$.  This
puts more restrictions on the perfect matchings arising from partitions of certain 9-sets
so the argument for the preceding case still works.

This completes the proof of Case 3.

\medskip     

{\bf Case 4}.  There are no tiples. Because there are no tiples, Lemma \ref{crucial} means
we need only find a labelling so that $\{1,2,c\}$ and $\{1,2,4\}$ are good sets.  To do
this we work with the distribution of the perfect matching edges.  If there is a point
$x\in L$ such that there are three edges in $K_{B_i,B_j}$ labelled $x$, then there are
no edges labelled $x$ incident with a point of $B_k$, $L=\{i,j,k\}$.  So label $x$ with $a$
and make $B_2=B_k$ and arbitrarily label $B_1$ and $B_3$.  There are no edges 
labelled with $a$ between $B_2$ and $B_3$ so that all sets of the forms $\{1,2,c\}$ and
$\{1,2,4\}$ are good.  Lemma \ref{crucial} then implies that $\pi$ is admissible.

Because the set $B_i,B_j,B_k$ contain nine points, there are at most four edges with
any of the labels coming from points of $L$.  The preceding paragraph tells us we may
assume the distribution between the three pairs of those three blocks is at most 1, 1, 2.
Thus, there must be at least two points $x,y\in L$ such that for some pair $B_i,B_j$
both $x$ and $y$ have at most one edge labelled with $x$ and $y$, respectively,
between the two blocks.  Hence, if we label the two blocks $B_2,B_3$ and the points
1 and 2, then both $\{1,2,c\}$ and $\{1,2,4\}$ are good sets.  Lemma \ref{crucial}
then shows that $\pi$ is admissible.    \end{proof}

\section{Further Considerations}

We have seen that not all partial Steiner triple systems are sequenceable and there
may be a connection between how many disjoint blocks such a system possesses and
sequenceability.  The following encapsulates the thought in a question.

Define a function $T$ as follows.  For each positive integer $k$, let $T(k)$ be the smallest integer
so that if $\mathcal{T}$ is any partial Steiner triple system with $k$ disjoint blocks, but not
$k+1$, and at least $3k+T(k)$ vertices, then $\mathcal{T}$ is sequenceable.  Let's see that
this is a well-defined function.

Suppose that $\mathcal{T}$ has $k$ disjoint blocks, but not $k+1$ disjoint blocks, and has
order at least $15k-5$.  Call the points of $k$ disjoint blocks $U=\{u_1,u_2,\ldots,u_{3k}\}$ and
let the remaining points be $V=\{v_1,v_2,\ldots,v_{12k-5},\ldots\}$.  Define a sequence by starting
with $u_1,u_2,u_3,\ldots,u_{3k}$ and then between $u_i$ and $u_{i+1}$, $1\leq i\leq 3k-1$,
insert five arbitrary points from $V$. Any points left over from $V$ are tacked on at the end.
 Call the resulting sequence
$\sigma$.  It is clear that no segment of length $3m$, $m>1$, can have a partition into
$m$ blocks because there are at most $m-1$ points from $U$.
This means at least one of the blocks must contain only points from $V$ which is a contradiction.

Segments of length 3 may be blocks but they may be eliminated by simple interchanges.  For
example, if $[v_{5i},u_i,v_{5i+1}]$ is a block, then interchange $v_{5i-1}$ and $v_{5i}$ and
the new segment is not a block.  There are several types but all are easily modified to finally
obtain an admissible sequence.

\medskip

In this paper we have proven that $T(1)=T(2)=T(3)=0$ and $T(4)>0$.  An obvious problem is:
what can be said about the function $T$?

\end{document}